\newtheorem{thm}{Theorem}[section]
\newtheorem{ex}[thm]{Example}
\newtheorem{defn}[thm]{Definition}
\newtheorem{rem}[thm]{Remark}
\newtheorem{prop}[thm]{Proposition}
\newcommand{\cadlag}{{c\`adl\`ag }}
\def\1{\mathbf{1}}
\def\R{\mathbb{R}}
\def\P{\mathbb{P}}
\def\E{\mathbb{E}}
\def\N{\mathbb{N}}
\def\d{\,\mathrm{d}}
\def\dd{\mathrm{d}}
\def\G{\Gamma}
\def\F{\mathcal{F}}
\def\G{\mathcal{G}}
\def\T{\mathbb{T}}
\def\D{\Delta}
\begin{document}
\title{Captive Jump Processes}
\author{Andrea Macrina$^{\dag}$, Levent A. Meng\"ut\"urk{$^{\dag}$}, Murat C. Meng\"ut\"urk{$^{\ddag}$\thanks{ a.macrina@ucl.ac.uk, l.menguturk@ucl.ac.uk, murat.menguturk@ozyegin.edu.tr}} \\ \\ {$^{\dag}$Department of Mathematics, University College London} \\ {London, United Kingdom} 
\\ {$^{\ddag}$}Center for Financial Engineering, Ozyegin University \\ Istanbul, Turkey}
\date{\today}
\maketitle

\begin{abstract}
We explicitly construct so-called captive jump processes. These are stochastic processes in continuous time, whose dynamics are confined by a time-inhomogeneous bounded domain. The drift and volatility of the captive processes depend on the domain's boundaries which in turn restricts the state space of the process. We show how an insurmountable shell may contain the captive jump process while the process may jump within the restricted domain. In a further development, we also show how---within a confined domain---inner time-dependent corridors can be introduced, which a captive jump process may leave only if the jumps reach far enough, while nonetheless being unable to ever penetrate the outer confining shell. Captive jump processes generalize the recently developed captive diffusion processes. In the case where a captive jump-diffusion is a continuous martingale or a pure-jump process, the uppermost confining boundary is non-decreasing, and the lowermost confining boundary is non-increasing. Captive jump processes could be considered to model phenomena such as electrons transitioning from one orbit (valence shell) to another, and quantum tunneling where stochastic wave-functions can ``penetrate'' boundaries (i.e., walls) of potential energy. We provide concrete, worked-out examples and simulations of the dynamics of captive jump processes in a geometry  with circular boundaries, for demonstration.
\end{abstract} 
{\bf Keywords:} Captive processes, jump-diffusion in continuous time; martingales; monitoring and path-dependence; bounded domains; quantum systems.\\
{\bf MSC2020:} 60

\section{Introduction}
In this work, we develop \emph{captive jump processes}, which may include jump-diffusion, pure jump processes, and path-dependent processes with jumps, all of which have a confined path space. These stochastic processes generalize the recently introduced captive diffusion processes by Meng\"ut\"urk \& Meng\"ut\"urk (2020) along with several potential applications thereof, see Meng\"ut\"urk \& Meng\"ut\"urk (2021a) for an example in quadratic optimization, and Meng\"ut\"urk \& Meng\"ut\"urk (2021b) to model random particle systems.

In many physical and social systems, one encounters random processes that are \emph{restricted} in their dynamics, where the stochastic phenomenon stays within a given topological subspace. We emphasize several examples such as Skorokhod-type stochastic differential equations (SDEs) (\cite{1,2,4,30}), diffusion processes on submanifolds (\cite{1aa,1ab,27a}), reflected diffusions (\cite{9,14,30ab}), Brownian excursions (\cite{6,28,33,41}), non-colliding diffusions (\cite{3,29,44,47}), Bessel processes (\cite{8,11,25, 48ab}) and captive diffusions \cite{50, 51, 52}.

The stochastic processes mentioned above, one common property they share is that they all have continuous paths. However, there are many observed systems, which display discontinuities (at random times), possibly due to shocks producing substantial impact on the evolution of the system. Here we mention transitions of quantum systems across excitation levels, e.g., electrons jumping to a different energy level, sudden price reactions to news in financial markets, and so on. Thus, studying processes manifesting pure-jump or jump-diffusion dynamics attracts a great deal of interest in numerous applications and modelling purposes---if not for their sake. 

Captive jump processes, by construction, cannot leave a given bounded domain, we call the \emph{master domain}. Within this domain, these processes may have either pure-jump, continuous, or jump-continuous dynamics---in fact, such a process may display all of these dynamics over non-overlapping time intervals. To construct captive jump processes, we rely on the approach developed in \cite{50}, and thus generalize it at the same time. Pure-jump and jump-diffusion processes are controlled by coefficients of stochastic differential equations (SDE), which satisfy certain regularity conditions with respect to a pair of \cadlag paths acting as the boundaries. One may further sub-divide the bounded domain into subspaces (e.g., internal corridors as in \cite{52}) and require by construction that the process stay within one of these restricting subspaces. At the same time, however, we can allow for non-zero probabilities that the processes may jump from one subspace to another, while still staying constricted within the master domain, almost surely.

The structure of this paper is as follows. Section 2 introduces captive jump-diffusion processes and studies some of their properties. In Section 3, we apply the framework to the physical phenomena as mentioned above, along with some simulations for demonstration purposes. 
\section{Construction and properties of captive jump processes}
We consider a filtered probability space $(\Omega,\F,(\F_{t})_{t \leq \infty},\P)$, $\F_{\infty}=\F$, where all filtrations are right-continuous and complete. We denote the Borel $\sigma$-field by $\mathcal{B}$ and introduce the time interval $\T=[0,T]$ with some fixed horizon $T<\infty$. We let $\mathcal{D}(\R)\subset\Omega$ be the Skorokhod space of \cadlag functions, where $X:\mathcal{D}(\R)\rightarrow \R$ produces a \cadlag process $(X_t)_{t\in\T}$. By $(\mathcal{F}_t^{X})_{t\in\T}$, we denote the natural filtration of $(X_t)_{t\in\T}$ such that $\mathcal{F}_t^{X}\subset\mathcal{F}_t$ for any $t\in\T$.

We start next with the construction of Markovian captive jump processes, after which we generalize to a path-dependent setting.
\subsection{Capitve jump-diffusion processes}
The space of continuous $\R$-valued $\{(\F_t),\P\}$-Markov martingales (e.g., Brownian motion) is denoted $\mathcal{M}(\R)\subset\mathcal{C}(\R)$, where $\mathcal{C}(\R)\subset\mathcal{D}(\R)$ is the space of continuous functions. We denote $\mathcal{J}(\R)\subset\mathcal{D}(\R)$ as the space of discontinuous functions so that any element $(J_t)_{t\in\T}$ of $\mathcal{J}(\R)$ is a Markov jump process with finite activity and jump-size equal to one. That is, if $(J_t)_{t\in\T}$ is a stochastic process, its randomness arises only from its jump-times (e.g., the Poisson process). We write 
$\D J_t = J_t - J_{t-}$ for $t>t-$, for all $t\in\T$. So, if $\D J_t = 0$, then the process $(J_t)$ is continuous at $t$. If $\D J_t \neq 0$, then this implies there is a discontinuity at $t$. The following function space will model the boundaries that define the restricted domains.
\begin{defn}
Let $\widetilde{\G}\subset\mathcal{D}(\R)$ be a set of measurable c\`adl\`ag functions, where for any  $g\in\widetilde{\G}$, $g:\T\rightarrow\R$ is a locally bounded map with a locally bounded right-derivative $\dd g_+(t)/\dd t$ on the intervals where $g(t)$ is continuous. We also introduce the following subspaces:
\begin{enumerate}
\item $\G\subset\widetilde{\G}$ is the space of (purely) continuous boundaries.
\item $\widetilde{\G}^{(l)}\subset\widetilde{\G}$ is such that $g\in\widetilde{\G}^{(l)}$ implies $\D g(t) \leq 0$ for all $t\in\T$.
\item $\widetilde{\G}^{(u)}\subset\widetilde{\G}$ is such that $g\in\widetilde{\G}^{(u)}$ implies $\D g(t) \geq 0$ for all $t\in\T$.
\end{enumerate}
\end{defn}
We now introduce a family of processes that extends \cite{50} and forms the main focus of this paper. Thereafter, we study their main properties and provide some examples below.
\begin{defn}
\label{maindeftwo}
Let $g^l\in\widetilde{\G}^{(l)}$ and $g^u\in\widetilde{\G}^{(u)}$ such that $g^l(t) < g^u(t)$ for all $t\in\T$. Then, a \emph{captive jump-diffusion process} $(X_t)_{t\in\T}\in\mathcal{D}(\R)$ is the solution to an SDE governed by
\begin{align}
\label{mainsdeonemain}
X_t = x_0 + \int_0^t \mu\left(s, X_s; g^l(s), g^u(s) \right)\dd s &+ \int_0^t \sigma\left(s, X_s; g^l(s), g^u(s)\right)\dd M_s \nonumber\\
&+ \sum_{0\leq s \leq t} \gamma\left(s-, X_{s-}; g^l(s-), g^u(s-)\right)\D J_s,
\end{align}
where $X_0 = x_0 \in [g^l(0), g^u(0)]$. The maps $\mu:\T\times\R \rightarrow \R$ and $\sigma:\T\times\R \rightarrow \R$ are locally bounded and continuous, and $\gamma:\T\times\R \rightarrow \R$ is locally bounded and \cadlag. They satisfy:
\begin{enumerate}
\item $\mu\left(t, g^l(t-); g^l(t), g^u(t)\right)\geq \dd g^l_+(t)/\dd t + \D g^l(t)$ and \\ $\mu\left(t, g^u(t-); g^l(t), g^u(t)\right)\leq \dd g^u_+(t)/\dd t + \D g^u(t)$,
\item $\sigma\left(t,g^l(t-); g^l(t), g^u(t) \right) = 0$ and $\sigma\left(t,g^u(t-); g^l(t), g^u(t) \right) = 0$,
\item $g^l(t-)-X_{t-} \leq \gamma\left(t-, X_{t-}; g^l(t-), g^u(t-)\right) \leq g^u(t-)-X_{t-}$,
\end{enumerate}
for all $t\in\T$, $\P$-a.s., assuming that $(M_t)_{t\in\T}\in\mathcal{M}(\R)$ and $(J_t)_{t\in\T}\in\mathcal{J}(\R)$ are mutually independent.
\end{defn}
To maintain a flexible level of generality, we \emph{define} captive jump-diffusion processes as solutions governed by Eq. (\ref{mainsdeonemain}) without imposing specific conditions on the coefficients of the SDE for existence and uniqueness. In order to get a unique (strong) solution, one can further require the coefficients satisfy Lipschitz continuity and linear growth.
\begin{prop}
\label{captivepropmain}
For any $t\in\T$, $g^l(t)\leq X_t\leq g^u(t)$ holds $\P$-almost surely.
\end{prop}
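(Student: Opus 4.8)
The plan is to establish the two inequalities $g^l(t)\le X_t$ and $X_t\le g^u(t)$ separately, since the two cases are symmetric (swap $g^l$ and $g^u$ and reverse the inequalities); I describe the lower barrier. Fixing a path, I would set $\tau=\inf\{t\in\T: X_t<g^l(t)\}$, so that it suffices to show $\P(\tau\le T)=0$. The initial value $x_0\ge g^l(0)$ lies in the admissible region, and the strategy is to separate the only two mechanisms by which a \cadlag\ trajectory can leave $[g^l,g^u]$: a jump of $(X_t)$, and purely continuous motion.

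First I would dispose of the jumps, which is the easy half. At a jump time $s$ of $(J_t)$ one has $\D X_s=\gamma(s-,X_{s-};g^l(s-),g^u(s-))$ because $(J_t)\in\mathcal{J}(\R)$ carries unit jumps, so condition (3) of Definition \ref{maindeftwo} gives $g^l(s-)\le X_{s-}+\gamma=X_s\le g^u(s-)$ whenever $X_{s-}\in[g^l(s-),g^u(s-)]$. Since $g^l\in\widetilde{\G}^{(l)}$ and $g^u\in\widetilde{\G}^{(u)}$ force $\D g^l(s)\le 0$ and $\D g^u(s)\ge 0$, the corridor can only widen across its own discontinuities, whence $g^l(s)\le g^l(s-)\le X_s$ and $X_s\le g^u(s-)\le g^u(s)$. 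Thus no jump of $(X_t)$, and no jump of the barriers themselves (across which $X$ stays continuous while the corridor opens up), can expel the path, so any first exit must occur through the continuous part of the dynamics.

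The analytic core is therefore to rule out a continuous crossing, and this is the step I expect to be the main obstacle. On any interval free of jumps of $(J_t)$ the path $(X_t)$ is continuous and solves the diffusion SDE driven by $M$, and I would study $Y_t:=X_t-g^l(t)$, whose continuous dynamics read $\dd Y_t=\left[\mu(t,X_t;g^l(t),g^u(t))-\dd g^l_+(t)/\dd t\right]\dd t+\sigma(t,X_t;g^l(t),g^u(t))\,\dd M_t$ on each maximal subinterval where $g^l$ is continuous (its downward jumps only relax the bound). Applying the Tanaka--Meyer formula to $Y_t^-$, the crucial structural fact is that by condition (2) one has $\sigma(t,g^l(t);\cdot)=0$, so $\dd\langle Y\rangle_s=\sigma(s,X_s;\cdot)^2\,\dd\langle M\rangle_s$ vanishes as $Y_s\downarrow 0$; using the continuity of $\sigma$ together with the H\"older/Lipschitz regularity one imposes for well-posedness, the occupation-time formula then annihilates the right local time $L^{0+}_t(Y)$. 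With no local time generated at the barrier, and with condition (1) reducing on continuity intervals to $\mu(t,g^l(t);\cdot)\ge\dd g^l_+(t)/\dd t$ so that $Y$ carries a nonnegative drift exactly where it touches $0$, a one-dimensional comparison argument of Yamada--Watanabe type (equivalently, a Gr\"onwall control of $\E[(Y_t^-)^2]$ once the local-time term is removed) should force $Y_t^-\equiv 0$, i.e. $X_t\ge g^l(t)$ on the segment.

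Finally I would patch the segments together: between the finitely many jumps of $(J_t)$ (finite activity) the continuous estimate propagates the lower bound, and at each such jump the path re-enters or stays in the corridor by the second paragraph, so iterating yields $g^l(t)\le X_t$ for every $t$, $\P$-a.s.; the symmetric treatment of $g^u$ and intersection of the two almost-sure events gives the claim (in fact simultaneously in $t$, by right-continuity). The hard part is genuinely the continuous non-crossing: at a contact point only the boundary values of $\mu$ and $\sigma$ are controlled, while immediately below $g^l$ the coefficients are unconstrained, so a naive Gr\"onwall estimate fails; everything hinges on exploiting $\sigma=0$ at the barrier to kill the local time there, which is exactly what the continuity and regularity of $\sigma$ buy and what renders the inward drift of condition (1) decisive. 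This is the portion of the argument where the machinery of \cite{50} is effectively being reused and extended.
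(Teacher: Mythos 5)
Your proposal is correct in outline, and its skeleton coincides with the paper's own: jumps are harmless because Property 3 of Definition \ref{maindeftwo} forces $X_s = X_{s-} + \gamma \in [g^l(s-), g^u(s-)]$ at each unit jump of $(J_t)$, the signed barrier discontinuities ($\D g^l \leq 0$, $\D g^u \geq 0$) mean the corridor can only widen across its own jumps, and finite activity lets you iterate over the jump-free segments. Where you genuinely diverge is in the continuous segment: the paper disposes of it in a single line by citing \cite{50} (``the continuous case is recovered if $\gamma = 0$, which is proven in \cite{50}'') and by asserting that the boundary conditions on $\mu$ and $\sigma$ ``ensure reflection or absorption,'' whereas you reconstruct the non-crossing argument explicitly: Tanaka--Meyer applied to $Y_t^- = (X_t - g^l(t))^-$, annihilation of the local time at $0$ via $\sigma(t, g^l(t); \cdot) = 0$, and a Gr\"onwall closure using the inward drift of Property 1. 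So your route is self-contained where the paper's is citation-based, and your diagnosis that the continuous crossing is the only hard step correctly locates where the mathematical content of \cite{50} actually lives; everything else in the paper's proof is the bookkeeping you carry out in your second and fourth paragraphs.

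One point you gesture at (``the H\"older/Lipschitz regularity one imposes for well-posedness'') deserves to be stated as an explicit hypothesis rather than an aside, because it is doing real work. The local-time annihilation rests on the bound
\begin{align}
\frac{1}{\epsilon}\int_0^t \1_{\{0\leq Y_s<\epsilon\}}\,\sigma^2\left(s,X_s;g^l(s),g^u(s)\right)\dd \langle M\rangle_s \;\leq\; \frac{\omega(\epsilon)^2}{\epsilon}\,\langle M\rangle_t, \notag
\end{align}
where $\omega$ is the spatial modulus of continuity of $\sigma$ near the barrier; this vanishes only if $\sigma$ is H\"older of order at least $1/2$ there, and indeed for $\sigma(x)=|x|^{\alpha}$ with $\alpha<1/2$ there exist weak solutions crossing $0$ despite $\sigma$ vanishing at the boundary. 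Since Definition \ref{maindeftwo} deliberately assumes only continuity and local boundedness (the paper explicitly declines to impose existence/uniqueness conditions), the proposition is really proved under the extra regularity one would in any case require for well-posedness---the paper buries this in the citation to \cite{50}, while your argument makes the dependence visible. Likewise, after the local time is removed, the Gr\"onwall step needs $\mu$ Lipschitz (not merely continuous) near the barrier, so that $\mu(s,X_s;\cdot)-\dd g^l_+(s)/\dd s \geq -C\,Y_s^-$ on $\{Y_s\leq 0\}$; as you correctly note, the boundary values of $\mu$ alone do not suffice. With those two hypotheses stated, your sketch closes, with $\E[Y_t^-]$ working just as well as $\E[(Y_t^-)^2]$.
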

\begin{proof}
The continuous case is recovered if $\gamma\left(t, X_t; g^l(t), g^u(t)\right)=0$ for all $t\in\T$, which is proven in \cite{50}. We have $\D X_t \neq 0$ only when $\D J_t =1$, where the magnitude of $\D X_t$ is conditionally restricted for all $t\in\T$ by Property 3. in Definition \ref{maindeftwo}. Jumps can at most take $(X_t)_{t\in\T}$ onto a boundary, where the conditions on $\mu(\cdot)$ and $\sigma(\cdot)$ ensure reflection or absorption $\P$-a.s., given that $(J_t)_{t\in\T}$ has finite activity. Hence, $g^l(t)\leq X_t\leq g^u(t)$ for all $t\in\T$, $\P$-almost surely.
\end{proof}
The reason why we call these processes \emph{captive} stems from Proposition \ref{captivepropmain}---the process cannot break free from the restricted domain.
\begin{prop}
\label{sumofcaptives}
Let $(X^{(c)}_t)_{t\in\T}$ and $(X^{(d)}_t)_{t\in\T}$ be such that
\begin{enumerate}
\item $(X^{(c)}_t)_{t\in\T}$ is a captive continuous process with paths in $[g^l_c(t), g^u_c(t)]_{t\in\T}$, almost surely.
\item $(X^{(d)}_t)_{t\in\T}$ is a captive pure-jump process, and thus discontinuous, with paths in $[g^l_d(t), g^u_d(t)]_{t\in\T}$, almost surely.
\end{enumerate}
Then $(X_t)_{t\in\T}$ given by $X^{(c)}_t + X^{(d)}_t$ is a captive jump-diffusion process with path in $[g^l_c(t), g^u_c(t)]_{t\in\T}$, a.s., given that $(x^{(c)}_0 + x^{(d)}_0)\in[g^l_c(0),g^u_c(0))$, $g^l_c(t) \leq g^l_d(t)$ and $g^u_d(t) \leq g^u_c(t)$ for all $t\in\T$.
\end{prop}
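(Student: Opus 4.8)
The plan is to reduce everything to Definition \ref{maindeftwo} and Proposition \ref{captivepropmain}: I would show that $X_t=X^{(c)}_t+X^{(d)}_t$ solves an SDE of the form (\ref{mainsdeonemain}) whose coefficients obey the three boundary conditions relative to the \emph{outer} pair $(g^l_c,g^u_c)$, and then invoke Proposition \ref{captivepropmain} to conclude $g^l_c(t)\le X_t\le g^u_c(t)$ $\P$-a.s. First I would write each summand in its defining form. Since $(X^{(c)}_t)$ is continuous it carries no jump term, so it solves (\ref{mainsdeonemain}) with coefficients $\mu_c,\sigma_c$ and $\gamma_c\equiv 0$; since $(X^{(d)}_t)$ is pure-jump its volatility vanishes, $\sigma_d\equiv 0$, and (in the canonical case) it carries no drift, so it solves (\ref{mainsdeonemain}) through its jump coefficient $\gamma_d$ alone. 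As $(M_t)$ and $(J_t)$ are the common, mutually independent drivers, adding the two equations shows that $(X_t)$ solves (\ref{mainsdeonemain}) with $x_0=x^{(c)}_0+x^{(d)}_0\in[g^l_c(0),g^u_c(0))$, drift $\mu=\mu_c$, volatility $\sigma=\sigma_c$, and jump coefficient $\gamma=\gamma_d$. Thus $(X_t)$ is a candidate captive jump-diffusion, and it remains to verify the three conditions of Definition \ref{maindeftwo} against $(g^l_c,g^u_c)$.

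Conditions 1 and 2 I expect to follow directly from the captivity of the continuous part. Because $\sigma=\sigma_c$ and $(X^{(c)}_t)$ is captive in $[g^l_c,g^u_c]$, the volatility already vanishes on both outer boundaries, giving Condition 2. Likewise, since the drift of the sum coincides with $\mu_c$, the one-sided inequalities $\mu(t,g^l_c(t-))\ge \dd g^{l}_{c,+}(t)/\dd t+\Delta g^l_c(t)$ and $\mu(t,g^u_c(t-))\le \dd g^{u}_{c,+}(t)/\dd t+\Delta g^u_c(t)$ hold by the corresponding conditions for $(X^{(c)}_t)$. If one wishes to permit the pure-jump part to carry a drift $\mu_d$, one would additionally require that $\mu_d$ have the correct sign at the outer boundaries so as not to spoil these inequalities.

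The delicate point — and the one I expect to be the main obstacle — is Condition 3 on the jumps, and this is exactly where the nesting hypotheses $g^l_c(t)\le g^l_d(t)$ and $g^u_d(t)\le g^u_c(t)$ enter. At a jump time $t$ (i.e.\ $\Delta J_t=1$) the continuous part does not move, so $\Delta X_t=\gamma_d\,\Delta J_t$ and $X_t=X^{(c)}_{t-}+\bigl(X^{(d)}_{t-}+\gamma_d\bigr)$. Captivity of $(X^{(d)}_t)$ forces the bracketed post-jump value into $[g^l_d(t-),g^u_d(t-)]$, which by nesting lies inside $[g^l_c(t-),g^u_c(t-)]$; the task is then to translate this \emph{relative} jump bound for $X^{(d)}$ into the \emph{absolute} bound $g^l_c(t-)-X_{t-}\le\gamma_d\le g^u_c(t-)-X_{t-}$ demanded of the sum. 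Making this step airtight requires controlling the position of the continuous part at jump times so that it leaves enough room for the admissible excursions of the jump part within the outer shell, and it is here that the nesting of the channels, together with the continuity of $(X^{(c)}_t)$ across the (finite-activity) jump times, must be exploited. Once Condition 3 is secured, all hypotheses of Proposition \ref{captivepropmain} are met for $(X_t)$ with boundaries $(g^l_c,g^u_c)$, and the claimed confinement $g^l_c(t)\le X_t\le g^u_c(t)$, $\P$-a.s., follows.
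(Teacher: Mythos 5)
Your overall strategy---verify the three conditions of Definition \ref{maindeftwo} relative to the outer pair $(g^l_c,g^u_c)$ and then invoke Proposition \ref{captivepropmain}---is the same reduction the paper uses. But the proposal has a genuine gap at precisely the point you flag yourself: Condition 3 is never established, only described as ``the main obstacle,'' and under your literal reading of $X_t=X^{(c)}_t+X^{(d)}_t$ as a pathwise sum of two separately constructed solutions it \emph{cannot} be established, because the confinement claim itself then fails. Take $g^l_c=-1$, $g^u_c=1$, $g^l_d=0$, $g^u_d=1$ (the nesting hypotheses hold) with $x^{(c)}_0=x^{(d)}_0=0$; let $X^{(d)}$ jump to a value near $1$ and afterwards let the independent continuous part (say $X^{(c)}_t=\sin(W_t)$) rise toward $g^u_c=1$: with positive probability the sum approaches $2$, far outside the outer shell. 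The hypotheses bound each summand separately but provide no coupling between the position of $X^{(c)}$ and that of $X^{(d)}$, so the ``room'' you hope to extract at jump times is simply not there. Your verification of Condition 2 contains the same conflation in miniature: $\sigma_c(t,\cdot)$ vanishes when \emph{its argument}, the state of $X^{(c)}$, equals $g^l_c(t-)$ or $g^u_c(t-)$, whereas Definition \ref{maindeftwo} requires $\sigma(t,X_{t-};\cdot)$ to vanish when the \emph{sum} $X_{t-}$ sits on the boundary, and $X_{t-}=g^u_c(t-)$ does not force $X^{(c)}_{t-}=g^u_c(t-)$ unless $X^{(d)}_{t-}=0$. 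Formally, the root of the trouble is that adding the two equations yields coefficients evaluated at $X^{(c)}_s$ and $X^{(d)}_{s-}$ separately, so the sum does not even solve an equation of the form (\ref{mainsdeonemain}) with coefficients depending on $(s,X_s)$ alone.

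What the paper intends---and what Example \ref{trigonometricexampleone}, which invokes this proposition, makes explicit---is a sum of \emph{dynamics}, not of solution paths: one forms a single SDE of type (\ref{mainsdeonemain}) whose drift and volatility are the coefficient functions $\mu_c,\sigma_c$ from the captive continuous construction on $[g^l_c,g^u_c]$ and whose jump coefficient is the $\gamma_d$ from the captive pure-jump construction on the nested pair $[g^l_d,g^u_d]$, with \emph{all three coefficients evaluated at the one process} $X$. (Note that in Example \ref{trigonometricexampleone} the combined process is not $S_t+Z_t$; every coefficient there is evaluated at $X_s$.) Under that reading the proof is immediate, which is why the paper's own proof is a one-liner about the ``linear construction'': Conditions 1 and 2 hold verbatim for $\mu_c,\sigma_c$ relative to $(g^l_c,g^u_c)$, and Condition 3 follows from the nesting chain
\begin{align}
g^l_c(t-)-X_{t-} \leq g^l_d(t-)-X_{t-} \leq \gamma_d\left(t-,X_{t-};g^l_d(t-),g^u_d(t-)\right) \leq g^u_d(t-)-X_{t-} \leq g^u_c(t-)-X_{t-}, \notag
\end{align}
after which Proposition \ref{captivepropmain} yields $g^l_c(t)\leq X_t\leq g^u_c(t)$ $\P$-a.s. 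So the repair is not to gain control of $X^{(c)}$ at the jump times, as you propose, but to abandon the pathwise-sum interpretation altogether.
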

\begin{proof}
If $\gamma=0$, then $(X_t)_{t\in\T}$ is a captive continuous process, see \cite{50}, and if $\mu = \sigma = 0$, then $(X_t)_{t\in\T}$ is a captive pure-jump process. The statement follows from the linear construction in Definition \ref{maindeftwo} and by using Proposition \ref{captivepropmain}.
\end{proof}

\begin{rem}
\label{diffusionrem}
Since neither $(M_t)_{t\in\T}$ nor $(J_t)_{t\in\T}$ are necessarily Markovian in general, captive jump-diffusion processes are not necessarily Markov processes. The term ``diffusion'' in this paper is to highlight continuity of paths and not Markovianity.
When $(X_t)_{t\in\T}$ is Markovian, we replace $(M_t)_{t\in\T}$ in (\ref{mainsdeonemain}) with an $\{(\F_t),\P\}$-Brownian motion $(W_t)_{t\in\T}$ for a canonical representation, and choose for $(J_t)_{t\in\T}$ an independent Markov jump process.
\end{rem}
\begin{ex}
\label{trigonometricexampleone}
As a trigonometric example, we first show that $(S_t)_{t\in\T}$ given by $S_t = \sin(W_t)$ is a captive-diffusion process between $(L^{(c)}_t)_{t\in\T}=-1$ and $(U^{(c)}_t)_{t\in\T}=1$. This follows since
\begin{align}
S_t &= -\frac{1}{2}\int_0^t\sin(W_u)\d u + \int_0^t\cos(W_u)\d W_u \notag \\
&= -\frac{1}{2}\int_0^t\sin(\sin^{-1}(S_u))\d u + \int_0^t\cos(\sin^{-1}(S_u))\d W_u  \notag \\
&= -\frac{1}{2}\int_0^tS_u\d u + \sqrt{1 - S_s^2}\d W_u, \notag
\end{align}
where $\mu\left(t, S_t; -1, 1 \right)=-\frac{1}{2}S_t$ satisfies Property 1. and $\sigma\left(t, S_t; -1, 1 \right)=\sqrt{1 - S_t^2}$ satisfies Property 2. of Definition \ref{maindeftwo}, respectively, at $(-1,1)$ for all $t\in\T$. Next, we can consider a captive pure-jump process $(Z_t)_{t\in\T}$ between $(L^{(d)}_t)_{t\in\T}=0$ and $(U^{(d)}_t)_{t\in\T}=1$ as follows:
\begin{align}
Z_t &= \sum_{0\leq s \leq t}\gamma_s \sqrt{1 - Z_s^2} \D J_s  \notag
\end{align}
where $\gamma_t\in(0,1]$ for all $t\in\T$. Hence, using Proposition \ref{sumofcaptives}, the following SDE provides a captive jump-diffusion process within $[-1,1]$ with positive jumps:
\begin{align}
X_t &= -\frac{1}{2}\int_0^t X_s\d s + \int_0^t\sqrt{1 - X_s^2}\d W_s + \sum_{0\leq s \leq t}\gamma_s \sqrt{1 - X_s^2} \D J_s.  \notag
\end{align}
One can replace $(S_t)_{t\in\T}$ above with $S_t = \cos(W_t)$, which is also a captive diffusion process.
\end{ex}
We can generalise the continuous part given in Example \ref{trigonometricexampleone} for a subclass of bounded functions. As such, the result below, together with Proposition \ref{sumofcaptives}, provides an alternative way to construct different captive jump-diffusion processes. 
\begin{prop}
\label{boundedfunctioncaptive}
Let $f:\R\rightarrow[a,b]$ be a bounded continuous function such that for $x=f(y)$,
\begin{enumerate}
\item $f$ has continuous inverse, i.e., $y=f^{-1}(x)$,
\item $f$ has bounded continuous first-derivative, i.e., $g(y) = \partial f/\partial y$ with $g(f^{-1}(a))=g(f^{-1}(b))=0$,
\item $f$ has  bounded continuous second-derivative, i.e., $h(y) = \partial^2 f/\partial y^2$ where $h(f^{-1}(a))\geq 0$ and $h(f^{-1}(b))\leq 0$.
\end{enumerate}
Then, $(X_t)_{t\in\T}$ given by $X_t = f(W_t)$ is a captive diffusion within $[a,b]$, for all $t\in\T$.
\end{prop}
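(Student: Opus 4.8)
The plan is to apply It\^o's formula to $X_t = f(W_t)$ and then match the resulting coefficients against the conditions of Definition \ref{maindeftwo} with constant boundaries $g^l\equiv a$ and $g^u\equiv b$, so that Proposition \ref{captivepropmain} delivers the confinement. Since $f$ is $\mathcal{C}^2$ with bounded first and second derivatives $g=\partial f/\partial y$ and $h=\partial^2 f/\partial y^2$ (Properties 2 and 3), It\^o's formula applies to the Brownian motion $(W_t)_{t\in\T}$ and yields
\begin{align}
X_t = f(W_0) + \int_0^t g(W_s)\,\d W_s + \half\int_0^t h(W_s)\,\d s, \notag
\end{align}
where boundedness of $g$ guarantees that the stochastic integral is a genuine (square-integrable) martingale, so that the decomposition is of the diffusion type in (\ref{mainsdeonemain}) with $\gamma\equiv 0$ and $X_0=f(W_0)\in[a,b]$.

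Next I would rewrite the coefficients as functions of the state $X_s$ rather than of $W_s$. Property 1 provides a continuous inverse, and continuity together with injectivity forces $f$ to be strictly monotone; hence $W_s = f^{-1}(X_s)$ and I may set
\begin{align}
\sigma\!\left(s,X_s;a,b\right) = g\!\left(f^{-1}(X_s)\right), \qquad \mu\!\left(s,X_s;a,b\right) = \half\,h\!\left(f^{-1}(X_s)\right). \notag
\end{align}
These maps are continuous and locally bounded, since $f^{-1}$ is continuous by hypothesis while $g$ and $h$ are bounded and continuous, so the compositions inherit both properties, as required by Definition \ref{maindeftwo}.

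It then remains to verify Properties 1 and 2 of Definition \ref{maindeftwo} at the two constant boundaries. Because $g^l\equiv a$ and $g^u\equiv b$ are constant, $\D g^l=\D g^u=0$ and $\dd g^l_+/\dd t=\dd g^u_+/\dd t=0$, so the drift inequalities reduce to $\mu(s,a;a,b)\geq 0$ and $\mu(s,b;a,b)\leq 0$. Evaluating, $\mu(s,a;a,b)=\half\,h(f^{-1}(a))\geq 0$ and $\mu(s,b;a,b)=\half\,h(f^{-1}(b))\leq 0$ are precisely the sign conditions in Property 3, while $\sigma(s,a;a,b)=g(f^{-1}(a))=0$ and $\sigma(s,b;a,b)=g(f^{-1}(b))=0$ are exactly the vanishing conditions in Property 2. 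With all hypotheses of Definition \ref{maindeftwo} met and $\gamma\equiv 0$, Proposition \ref{captivepropmain} gives $a\leq X_t\leq b$ for all $t\in\T$, $\P$-a.s.

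The step I expect to be the main obstacle is the inversion $W_s=f^{-1}(X_s)$ and the meaning of $f^{-1}(a)$ and $f^{-1}(b)$. A strictly monotone bounded $f$ on all of $\R$ typically attains the extreme values $a,b$ only in the limit $y\to\pm\infty$ (as for $f=\tanh$), so $f^{-1}(a)$ and $f^{-1}(b)$ must be read as the corresponding one-sided limits, and Properties 2--3 as boundary-limit conditions on $g$ and $h$; one should check that the coefficient identification and the sign and vanishing conditions survive this limiting reading. Relatedly, genuinely non-injective motivating maps such as $f=\sin$ fall outside the literal injectivity hypothesis and are only handled by the more direct computation in Example \ref{trigonometricexampleone}. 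Once the boundary behaviour of $g$ and $h$ is interpreted consistently, the remainder is the routine coefficient-matching above.
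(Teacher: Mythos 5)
Your proof is correct and takes essentially the same route as the paper's: apply It\^o's formula to $f(W_t)$, rewrite the drift and volatility via $f^{-1}$ as $\mu = \tfrac{1}{2}h(f^{-1}(X_s))$ and $\sigma = g(f^{-1}(X_s))$, check Properties 1 and 2 of Definition \ref{maindeftwo} at the constant boundaries $a$ and $b$ (whose right-derivatives vanish), and conclude with $\gamma=0$. Your closing caveat---that for a strictly monotone bounded $f$ on $\R$ the values $f^{-1}(a)$, $f^{-1}(b)$ must be read as one-sided limits, and that non-injective maps like $\sin$ fall outside the literal hypotheses---is a legitimate subtlety the paper's proof passes over silently, but it does not change the argument.
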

\begin{proof}
Using It\^o's integration by parts formula and the conditions provided for the map $f$, we have
\begin{align}
X_t &= X_0 + \frac{1}{2}\int_0^t h(W_s)\d s + \int_0^t g(W_s)\d W_s, \notag \\
&= X_0 + \frac{1}{2}\int_0^t h(f^{-1}(X_s))\d s + \int_0^t g(f^{-1}(X_s))\d W_s. \notag
\end{align}
Since $f:\R\rightarrow[a,b]$, we have $f(0)=X_0\in[a,b]$. Having $h(f^{-1}(a))\geq 0$ and $h(f^{-1}(b))\leq 0$, Property 1., in Definition \ref{maindeftwo} is satisfied since $a$ and $b$ have vanishing right-derivatives. In addition, $g(f^{-1}(a))=g(f^{-1}(b))=0$ shows that Property 2., is satisfied. Since the process $(X_t)_{t\in\T}$ is continuous, $\gamma=0$, and the statement follows. 
\end{proof}
Resorting to Proposition \ref{boundedfunctioncaptive} is not the only way to construct captive jump-continuous processes. In fact, this construction only gives rise to a relatively small family of captive jump diffusions. We provide an alternative and more involved example below.
\begin{ex}
\label{meanrevertexamplejump}
Let $(J_t)_{t\in\T}$ be a Poisson process. The following process is a mean-reverting captive jump-diffusion with reflective boundaries:
\begin{align}
\label{meanrevertmin}
X_t = x_0 &+ \int_0^t \kappa_s(\beta_s - X_s ) \dd s + \int_0^t \alpha_s\left(X_s - L_s\right)\left(U_s - X_s\right)\dd W_s \notag \\
&+ \sum_{0\leq s \leq t} \theta_{s-} \min\left(X_{s-} - L_{s-}, U_{s-} - X_{s-}\right)\D J_s,  
\end{align}
for $x_0\in[L_0,U_0]$, where $(\alpha_t)_{t\in\T}$ and $(\kappa_t)_{t\in\T}>0$ are adapted continuous locally bounded maps, and $L_t < \beta_t < U_t$ for all $t\in\T$, so that $\kappa_t(\beta_t - L_t) > \dd L_+(t)/\dd t + \D L_t$ and $\kappa_t(\beta_t - U_t) < \dd U_+(t)/\dd t+ \D U_t$, for all $t\in\T$. Also, $(\theta_t)_{t\in\T}$ is a \cadlag map where $\theta_t\in[-1,1]/\{0\}$, for all $t\in\T$. 

In the figure below, we plot sample paths genereted by a simplified version of the SDE (\ref{meanrevertmin}), where we consider constant boundaries $(L_t)_{t\in\T}$ and $(U_t)_{t\in\T}$, and constant $(\beta_t)_{t\in\T}$.
\begin{figure}[h!]
\begin{center}
\includegraphics[width=8cm]{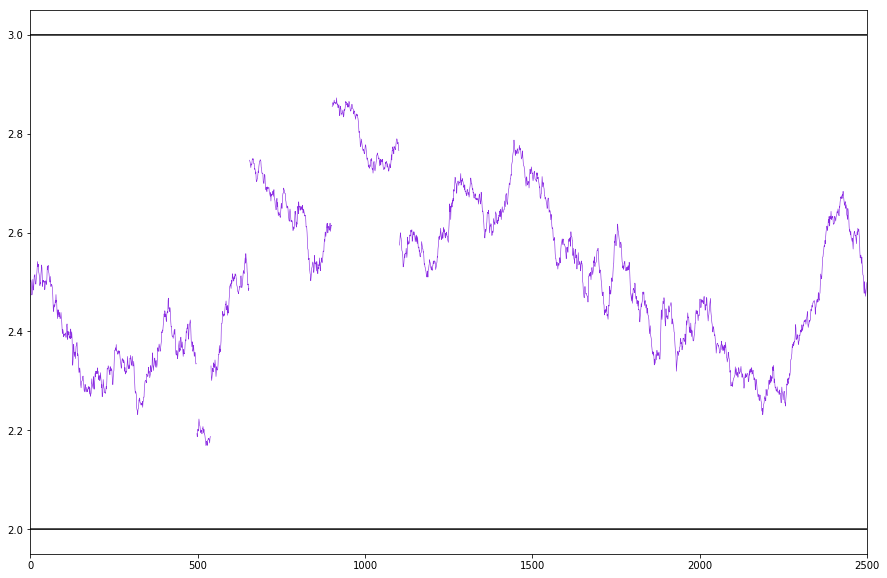}
\includegraphics[width=8cm]{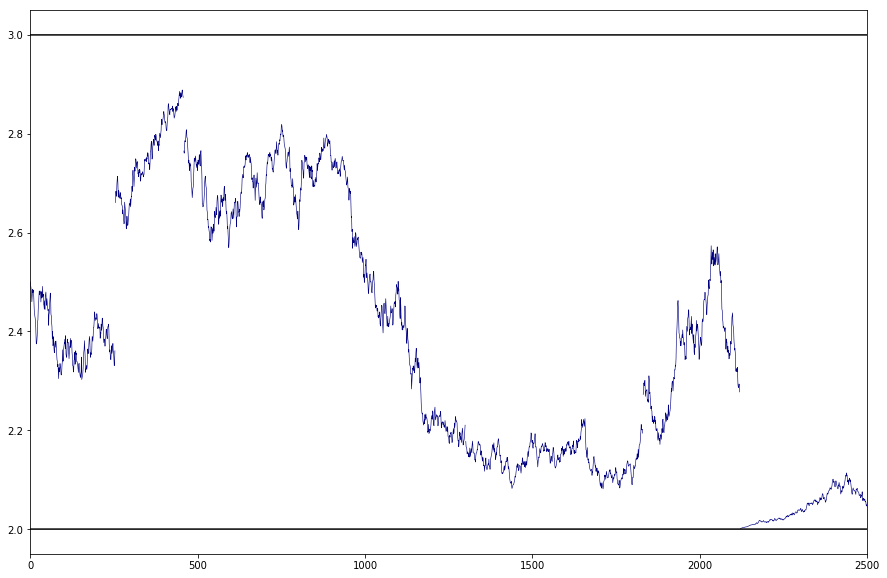} 
\caption[Captiveone]{Here, $(L_t)_{t\in\T}=2$ and $(U_t)_{t\in\T}=3$, and $(\beta_t)_{t\in\T}=2.5$.}
\end{center}
\end{figure}
\end{ex}
Since sums of semimartingales are semimartingales, if $(J_t)_{t\in\T}$ is a $\{(\F_t),\P\}$-semimartingale, then $(X_t)_{t\in\T}$ is a $\{(\F_t),\P\}$-semimartingale given that $\mu$ has locally bounded variation.
\begin{prop}
\label{monotononicbounds}
Let $L, U\in\mathcal{G}$. Then $(L_t)_{t\in\T}$ must be non-increasing and $(U_t)_{t\in\T}$ must be non-decreasing if either of the following holds:
\begin{enumerate}
\item $(X_t)_{t\in\T}$ is a continuous $\{(\F_t),\P\}$-martingale,
\item $(X_t)_{t\in\T}$ is a pure-jump process.
\end{enumerate}
\end{prop}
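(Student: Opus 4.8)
The plan is to reduce both cases to the single statement that the drift coefficient $\mu$ vanishes when evaluated at either confining boundary, after which the monotonicity is read off directly from Property 1 of Definition \ref{maindeftwo}. Since $L,U\in\mathcal{G}$ are purely continuous, $\D L(t)=\D U(t)=0$ for all $t\in\T$, so the jump corrections in Property 1 drop out and the boundary conditions collapse to
\[
\mu(t,L(t);L(t),U(t))\geq \dd L_+(t)/\dd t, \qquad \mu(t,U(t);L(t),U(t))\leq \dd U_+(t)/\dd t .
\]
Consequently, once I establish $\mu(t,L(t);L(t),U(t))=0$ and $\mu(t,U(t);L(t),U(t))=0$, I immediately obtain $\dd L_+(t)/\dd t\leq 0$ and $\dd U_+(t)/\dd t\geq 0$, i.e.\ $(L_t)_{t\in\T}$ non-increasing and $(U_t)_{t\in\T}$ non-decreasing.

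The pure-jump case is the easy one. As characterized in the proof of Proposition \ref{sumofcaptives}, a captive pure-jump process corresponds to $\mu=\sigma=0$ in (\ref{mainsdeonemain}): any non-trivial absolutely continuous drift would destroy the pure-jump nature of the paths. Thus $\mu$ is identically zero, in particular at both boundaries, and the displayed inequalities yield the claim at once.

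For the continuous-martingale case I would argue as follows. Here $\gamma=0$, so $X_t=x_0+\int_0^t\mu\,\dd s+\int_0^t\sigma\,\dd M_s$, and the stochastic integral against $M\in\mathcal{M}(\R)$ is a local martingale. If $X$ is itself a continuous martingale, then its finite-variation part $\int_0^t\mu(s,X_s;L(s),U(s))\,\dd s$ is a continuous local martingale of finite variation, hence identically zero; therefore $\mu(s,X_s;L(s),U(s))=0$ for Lebesgue-almost every $s$, $\P$-a.s., along the realized path. To pass from the path to the boundary values I would invoke Property 2: at either boundary $\sigma=0$, so while the process sits at $L(t)$ its only local motion is the deterministic drift $\mu(t,L(t);L(t),U(t))\,\dd t$, and a strictly signed such drift over any set of positive occupation would contribute a non-trivial finite-variation term incompatible with the martingale property; this forces $\mu(t,L(t);L(t),U(t))=0$, and symmetrically at the upper boundary. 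Feeding this back into the reduced Property 1 gives the asserted monotonicity.

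The main obstacle is precisely this last transfer step in the martingale case: the finite-variation argument delivers $\mu=0$ only along the path, whereas the conclusion is a statement about the boundary functions $L$ and $U$. I expect to close the gap by combining the vanishing of $\sigma$ at the boundaries (Property 2) with the continuity of $\mu$, so that on the event that the path approaches a boundary the drift converges to its boundary value, which must then be zero; should the path never reach a boundary, the boundary is not binding and Property 1 is only required to be consistent with zero drift, which already enforces $\dd L_+(t)/\dd t\leq 0$ and $\dd U_+(t)/\dd t\geq 0$. The pure-jump case, by contrast, requires none of this delicacy.
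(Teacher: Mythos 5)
Your overall reduction---read Property 1 of Definition \ref{maindeftwo} at the boundaries with $\D L_t=\D U_t=0$ and conclude once $\mu$ vanishes there---is exactly the paper's skeleton, but the paper disposes of the martingale case by fiat: it asserts that ``$(X_t)$ a continuous martingale'' \emph{means} $\mu=\gamma=0$ identically as coefficients of the SDE, after which $0\geq \dd L_+(t)/\dd t$ and $0\leq \dd U_+(t)/\dd t$ are immediate. Your attempt to instead \emph{derive} the vanishing of $\mu$ from the martingale property is where the genuine gap sits, and your proposed patch does not close it. The finite-variation argument only yields $\mu(s,X_s;L(s),U(s))=0$ along the realized path; your fallback for the branch where the path never visits a boundary is circular, because in that branch Property 1 still reads $\mu\left(t,L(t);L(t),U(t)\right)\geq \dd L_+(t)/\dd t$ with a left-hand side about which the pathwise information says nothing, so nothing forces $\dd L_+(t)/\dd t\leq 0$. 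Concretely, take $X_t\equiv x_0$, $\sigma\equiv\gamma\equiv 0$, a strictly increasing $L$ staying below $x_0$, a constant $U$ above it, and a continuous $\mu$ with $\mu(t,x_0;L(t),U(t))=0$, $\mu(t,L(t);L(t),U(t))=\dd L_+(t)/\dd t>0$ and $\mu(t,U;L(t),U(t))=0$: every requirement of Definition \ref{maindeftwo} holds, $X$ solves the SDE, and it is a (trivial) continuous martingale captive above an \emph{increasing} lower boundary. So the proposition is true only under the coefficient-level reading the paper adopts ($\mu\equiv 0$ as a function, not merely along the path), and your pathwise route cannot be completed as stated; the honest fix is to state that convention rather than to argue from occupation of the boundary.

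For the pure-jump case your argument is valid but takes a different route from the paper's. You invoke Property 1 with $\mu\equiv 0$ (again a coefficient-level reading), which is one line. The paper instead argues probabilistically: with $\mu=\sigma=0$ and $(J_t)$ of finite activity and jump times independent of the boundaries, the path is constant on jump-free intervals, so if $\dd U_+(t)/\dd t<0$ then $\P(X_t>U_t)>0$ (and symmetrically for $L$), contradicting the captivity guaranteed by Proposition \ref{captivepropmain}. That version establishes monotonicity as a genuine necessary condition for captivity without touching the boundary values of any coefficient, and so it is immune to the never-visits-the-boundary issue above; your version is shorter but inherits the same convention that rescues the martingale case.
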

\begin{proof}
Since $L,U\in\mathcal{G}$, we have $\D L_t=\D U_t=0$ for any $t\in\T$. For the first part, if $(X_t)_{t\in\T}$ is a continuous $\{(\F_t),\P\}$-martingale then $\mu=\gamma=0$. Since $\mu\left(t, L_{t-}; L_t, U_t\right) = 0 \geq \dd L_+(t)/\dd t$ and $\mu\left(t, U_{t-}; L_t, U_t\right) = 0 \leq \dd U_+(t)/\dd t$ must hold for all $t\in\T$, it follows that $(L_t)_{t\in\T}$ must be non-increasing and $(U_t)_{t\in\T}$ must be non-decreasing. For the second part, if $(X_t)_{t\in\T}$ is a pure-jump process, then $\mu=\sigma=0$. Since the jump-times of $(J_t)_{t\in\T}$ do not depend neither on $(L_t)_{t\in\T}$ or on $(U_t)_{t\in\T}$, and since $\gamma$ cannot act on $(X_t)_{t\in\T}$ over time horizons where $(J_t)_{t\in\T}$ does not jump, we have the following:
\begin{align}
&\P(X_t > U_t)>0 \hspace{0.15in} \text{if $\dd U_+(t)/\dd t < 0$}, \notag \\ 
&\P(X_t < L_t)>0 \hspace{0.15in} \text{if $\dd L_+(t)/\dd t > 0$},  \notag 
\end{align}
for $t\in\T$. However, the captivity property is preserved if $\dd U_+(t)/\dd t \geq 0$ and $\dd L_+(t)/\dd t \leq 0$, since the boundaries cannot block the paths of $(X_t)_{t\in\T}$ even over time horizons where no jumps occur, and hence over periods during which $(X_t)_{t\in\T}$ remains constant.
\end{proof}
\begin{prop}
\label{conditionsonbounds}
Let $(L_t)_{t\in\T}$ and $(U_t)_{t\in\T}$ be constant and $\E^{\P}[\D J_t \,|\, \F_{t-} ]>0$ for all $t\in\T$. Then the boundaries must be absorbing if 
\begin{enumerate}
\item $(X_t)_{t\in\T}$ is a continuous $\{(\F_t),\P\}$-martingale,
\item $(X_t)_{t\in\T}$ is a pure-jump $\{(\F_t),\P\}$-martingale.
\end{enumerate}
\end{prop}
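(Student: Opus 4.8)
The plan is to rule out reflection at each boundary by combining the martingale property with the two-sided bound $g^l(t) \le X_t \le g^u(t)$ from Proposition \ref{captivepropmain}, specialised to the constant levels $L$ and $U$. The clean path-level argument runs as follows. Let $\tau_U = \inf\{t \in \T : X_t = U\}$ be the first hitting time of the upper boundary. On $\{\tau_U \le t\}$, optional sampling gives $\E^\P[X_t \mid \F_{\tau_U}] = X_{\tau_U} = U$, while captivity gives $U - X_t \ge 0$ almost surely; hence $\E^\P[U - X_t \mid \F_{\tau_U}] = 0$ forces $X_t = U$ for all $t \ge \tau_U$. Thus once $(X_t)_{t\in\T}$ reaches $U$ it remains there, i.e.\ $U$ is absorbing, and the symmetric argument at $L$ (using $X_t \ge L$) shows $L$ is absorbing as well. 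Since this uses only boundedness and the martingale property, it applies to both the continuous martingale of case (1) and the pure-jump martingale of case (2).

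To see how the coefficients of Definition \ref{maindeftwo} enforce this, I would then read off the boundary behaviour in each case. In case (1), a continuous $\{(\F_t),\P\}$-martingale has $\mu = \gamma = 0$, so the only driver is the $\sigma\,\dd M$ term; Property 2 gives $\sigma(t, U; L, U) = \sigma(t, L; L, U) = 0$, so upon reaching a boundary the local increment of $(X_t)_{t\in\T}$ vanishes and there is no drift or jump available to return it to $(L, U)$. A reflecting mechanism would require an additional finite-variation (local-time) push at the boundary, which turns the process into a strict sub- or supermartingale and is therefore incompatible with the continuous-martingale hypothesis; hence the boundary can only be absorbing.

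In case (2) the process is pure-jump with $\mu = \sigma = 0$, so $X_t = x_0 + \sum_{0 \le s \le t}\gamma(s-, X_{s-}; L, U)\,\D J_s$, and here the hypothesis $\E^\P[\D J_t \mid \F_{t-}] > 0$ does the work. When $X_{t-} = U$, Property 3 forces $\gamma(t-, U; L, U) \le 0$, so any jump off the upper boundary is inward; the predictable compensator contributed by such a jump is $\gamma(t-, X_{t-}; L, U)\,\E^\P[\D J_t \mid \F_{t-}]$, and for $(X_t)_{t\in\T}$ to be a martingale this predictable drift must vanish. Since $\E^\P[\D J_t \mid \F_{t-}] > 0$, this forces $\gamma(t-, U; L, U) = 0$, so no jump can carry $(X_t)_{t\in\T}$ off the upper boundary; the symmetric argument at $L$, where Property 3 gives $\gamma(t-, L; L, U) \ge 0$, yields $\gamma(t-, L; L, U) = 0$. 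Thus both boundaries are absorbing, and a reflecting jump boundary is excluded precisely because a strictly one-signed admissible jump together with a strictly positive jump rate would produce a non-vanishing predictable drift.

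The step I expect to be the main obstacle is the rigorous identification of the predictable compensator of the jump term in case (2) and the justification that the martingale property forces the admissible jump coefficient to vanish at the boundary. This requires care about the precise meaning of $\E^\P[\D J_t \mid \F_{t-}] > 0$ (predictable jump instants versus a positive jump intensity) and about the fact that, along paths that actually reach a boundary, $\gamma$ is evaluated at $X_{t-} = U$ or $X_{t-} = L$, where Property 3 pins its sign; it is this sign constraint that upgrades ``the predictable drift vanishes'' into ``the jump vanishes,'' and hence into genuine absorption rather than a merely compensated reflection.
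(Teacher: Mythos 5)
Your proposal is correct, and its core argument is genuinely different from the paper's. The paper works entirely at the level of the SDE coefficients: it sets $\tau$ equal to the first hitting time of either boundary, defines $Y_t = X_{t\wedge\tau}$, expands the martingale identity $\E^{\P}[X_t\,|\,\F_u]=X_u$ to conclude that the drift and the jump contribution must vanish, factors $\E^{\P}[\gamma\,\D J_s\,|\,\F_u]=\E^{\P}[\gamma\,|\,\F_u]\,\E^{\P}[\D J_s\,|\,\F_u]$ via the independence of $(J_t)_{t\in\T}$, and then---exactly as in your case (2)---uses the sign constraint from Property 3 of Definition \ref{maindeftwo} at the boundary ($\gamma(\rho-,L;L,U)\geq 0$, $\gamma(\rho-,U;L,U)\leq 0$) to upgrade the vanishing conditional expectation to $\gamma=0$ there, giving $X=Y$. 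Your opening optional-sampling argument is instead a path-level proof: a martingale confined to $[L,U]$ that hits a boundary satisfies $\E^{\P}[U-X_t\,|\,\F_{\tau_U\wedge t}]=0$ with $U-X_t\geq 0$, hence is absorbed---this needs neither the SDE structure nor the hypothesis $\E^{\P}[\D J_t\,|\,\F_{t-}]>0$, and it covers both cases (1) and (2) at once (with the routine upgrade to all $t$ simultaneously via a countable dense set and right-continuity). What each approach buys: yours shows that path-level absorption is automatic for any bounded martingale, and correctly isolates the role of the positive-jump-rate hypothesis as forcing the \emph{coefficient} $\gamma$ to vanish at the boundary (ruling out a jump mechanism that never fires); the paper's route stays inside its framework and identifies precisely which structural ingredient (the one-sided sign of $\gamma$ at a boundary) does the work. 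Your compensator formulation of case (2) is a clean substitute for the paper's independence-based factorization, and you rightly flag the shared informality in the hypothesis: for a Poisson-type $(J_t)_{t\in\T}$, $\E^{\P}[\D J_t\,|\,\F_{t-}]=0$ at every fixed $t$, so the condition must be read as a positive jump intensity (positivity of the compensator's increments), a reading the paper itself implicitly adopts.
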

\begin{proof}
Let $\tau = \inf(t\geq 0 : X_t = L_t = L \,\cup\, X_t = U_t = U)$ be the first-hitting time to either of the boundaries where $\inf\emptyset=\infty$, and define $(Y_t)_{t\in\T}$ such that $Y_t = X_{t\wedge \tau}$ for all $t\in\T$. Since $(L_t)_{t\in\T}$ and $(U_t)_{t\in\T}$ are constant, we have $\dd L_+(t)/\dd t = \dd U_+(t)/\dd t = 0$. If $(X_t)_{t\in\T}$ is an $\{(\F_t),\P\}$-martingale, then we have $\E^{\P}[|X_t|] < \infty$ and
\begin{align}
\label{nodriftnojump}
\E^{\P}[X_t  \,|\, \F_u ] &= x_0 + \int_0^u \mu\left(s, X_s; L_s, U_s \right)\dd s + \int_0^u \sigma\left(s, X_s; L_s, U_s\right)\dd M_s + \sum_{0\leq s \leq u} \gamma\left(s-, X_{s-}; L_{s-}, U_{s-}\right)\D J_s \notag \\
&\hspace{.95cm}+ \E^{\P}\left[\int_u^t \mu\left(s, X_s; L_s, U_s \right)\,|\, \F_u \right]\dd s + \E^{\P}\left[\int_u^t \sigma\left(s, X_s; L_s, U_s\right)\dd M_s\,|\, \F_u \right] \notag \\
&\hspace{.95cm}+ \E^{\P}\left[\sum_{u < s \leq t} \gamma\left(s-, X_{s-}; L_{s-}, U_{s-}\right)\D J_s\,|\, \F_u \right] \notag \\
&= X_u + \int_u^t \E^{\P}[\mu\left(s, X_s; L_s, U_s \right)\,|\, \F_u ]\dd s + \sum_{u < s \leq t} \E^{\P}[\gamma\left(s-, X_{s-}; L_{s-}, U_{s-}\right)\D J_s\,|\, \F_u ] \notag \\
& = X_u,
\end{align}
for all $u\leq t\in\T$. If $(X_t)_{t\in\T}$ is continuous, then $\gamma=0$ and $\mu=0$ must hold, and since $\sigma(\tau)=0$ by Definition \ref{maindeftwo}, we must have $(X_t)_{t\in\T} = (Y_t)_{t\in\T}$. If $(X_t)_{t\in\T}$ is a pure-jump process, then $\mu=\sigma=0$, By using the independence of $(J_t)_{t\in\T}$, the following must hold:
\begin{align}
\sum_{u < s \leq t} \E^{\P}[\gamma\left(s-, X_{s-}; L_{s-}, U_{s-}\right)\D J_s\,|\, \F_u ] = \sum_{u < s \leq t} \E^{\P}[\gamma\left(s-, X_{s-}; L_{s-}, U_{s-}\right)\,|\, \F_u ]\E^{\P}[\D J_s\,|\, \F_u ] = 0  \notag.
\end{align}
Since this holds for every $u < s \leq t\in\T$ and $\E^{\P}[\D J_s\,|\, \F_u ] > 0$, $\E^{\P}[\gamma\left(s-, X_{s-}; L_{s-}, U_{s-}\right)\,|\, \F_u ]=0$ must hold for every $u < s \leq t\in\T$. If $\tau\in\T$, that is, the captive process jumped onto a boundary during $\T$, and if $\rho\in\T$ is any time strictly \emph{after} $\tau$ at which $(J_t)_{t\in\T}$ jumps (this always holds due to the finite-activity property), we have:
\begin{equation}
\label{jumpconditionalatboundary}
\E^{\P}[\gamma\left(\rho-, X_{\rho-}; L_{\rho-}, U_{\rho-}\right)\,|\, \F_\tau ]=0 \Rightarrow
\begin{cases}
\E^{\P}[\gamma\left(\rho-, L_{\rho-}; L_{\rho-}, U_{\rho-}\right)\,|\, \F_\tau ]=0 & \text{if $X_{\tau}=L_{\tau}$},\\
\\
\E^{\P}[\gamma\left(\rho-, U_{\rho-}; L_{\rho-}, U_{\rho-}\right)\,|\, \F_\tau ]=0 & \text{if $X_{\tau}=U_{\tau}$}
\end{cases}
\end{equation}
for $\tau < \rho \in\T$. Using Property 3., in Definition \ref{maindeftwo}, we have one of the following cases:
\begin{align}
& \gamma\left(\rho-, L_{\rho-}; L_{\rho-}, U_{\rho-}\right)\geq 0, \notag \\ 
& \gamma\left(\rho-, U_{\rho-}; L_{\rho-}, U_{\rho-}\right)  \leq 0.  \notag
\end{align}
In any of these two cases, Eq. (\ref{jumpconditionalatboundary}) holds only if $\gamma\left(\rho-, X_{\rho-}; L_{\rho-}, U_{\rho-}\right)=0$ for any $\rho > \tau$. Thus, $(X_t)_{t\in\T} = (Y_t)_{t\in\T}$ must hold.
\end{proof}
We now ask: Is there a family of transformations under which a captive jump-diffusion is mapped to another captive jump-diffusion? The answer is yes. First, let $\mathcal{C}_{b}^2(\R) \subset \mathcal{C}(\R)$ be the subspace of continuous locally bounded functions that are twice-differentiable with continuous locally bounded derivatives. From this point onwards, $(X_t)_{t\in\T}$ is a Markovian captive jump-diffusion process as in Remark \ref{diffusionrem}. We shall recall that the jump-times of $(J_t)_{t\in\T}$ are mutually independent of $(X_t)_{t\in\T}$.
\begin{prop}
Let $L,U\in\G$, $f\in\mathcal{C}_{b}^2(\R)$ and $Y_t = f(X_t)$ for all $t\in\T$. If $f$ is monotonic over the domain of $(X_t)_{t\in\T}$, then $(Y_t)_{t\in\T}$ is also a captive jump-diffusion process.
\end{prop}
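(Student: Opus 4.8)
The plan is to apply the Itô formula for jump-diffusions to $Y_t=f(X_t)$, read off the induced SDE, and verify that its coefficients together with a transformed pair of boundaries satisfy the three properties of Definition \ref{maindeftwo}. Since $f$ is (strictly) monotonic over the domain of $(X_t)_{t\in\T}$, it is invertible there, so $X_s=f^{-1}(Y_s)$ and every coefficient below can be rewritten as a function of $Y_s$.

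First I would use the canonical decomposition of Remark \ref{diffusionrem}, writing $\dd X_t=\mu\,\dd t+\sigma\,\dd W_t+\gamma\,\dd J_t$ with the arguments $(s,X_s;L_s,U_s)$ suppressed, and recall that the only jumps of $(X_t)_{t\in\T}$ occur when $\D J_s=1$, with jump size $\gamma(s-,X_{s-};L_{s-},U_{s-})$. Applying Itô's formula to $f\in\mathcal{C}^2_b(\R)$ gives
\begin{align}
Y_t=f(x_0) &+\int_0^t\Big[f'(X_s)\,\mu+\half f''(X_s)\,\sigma^2\Big]\d s+\int_0^t f'(X_s)\,\sigma\,\d W_s \notag \\
&+\sum_{0\leq s\leq t}\Big[f\big(X_{s-}+\gamma\big)-f(X_{s-})\Big]\D J_s. \notag
\end{align}
Taking (for $f$ increasing) the new boundaries $f(L_t)$ and $f(U_t)$, and defining $\widetilde\mu:=f'(f^{-1}(\cdot))\,\mu+\half f''(f^{-1}(\cdot))\,\sigma^2$, $\widetilde\sigma:=f'(f^{-1}(\cdot))\,\sigma$, and $\widetilde\gamma:=f\big(f^{-1}(Y_{s-})+\gamma\big)-Y_{s-}$, this is an SDE of the form \eqref{mainsdeonemain} driven by the same independent pair $(W_t)_{t\in\T}$ and $(J_t)_{t\in\T}$. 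The regularity required of $(\widetilde\mu,\widetilde\sigma,\widetilde\gamma)$ is inherited from $f\in\mathcal{C}^2_b(\R)$ and the continuity/local boundedness of $\mu,\sigma,\gamma$; since $L,U\in\G$ are continuous and $f$ is continuous, the transformed boundaries $f(L_t),f(U_t)$ are continuous with locally bounded right-derivatives $f'(L_t)\,\dd L_+(t)/\dd t$ and $f'(U_t)\,\dd U_+(t)/\dd t$, and hence lie in $\G\subset\widetilde\G^{(l)}\cap\widetilde\G^{(u)}$; monotonicity also yields $f(L_t)<f(U_t)$ and places $f(x_0)$ in the transformed initial interval.

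Next I would verify the three properties, treating the easy ones first. For Property 2, at the lower boundary $Y=f(L_t)$ we have $X=L_t$, so $\widetilde\sigma=f'(L_t)\,\sigma(t,L_{t-};L_t,U_t)=0$ by Property 2 of the original process, and likewise at $f(U_t)$. For Property 3, the original bound $L_{t-}-X_{t-}\leq\gamma\leq U_{t-}-X_{t-}$ rearranges to $L_{t-}\leq X_{t-}+\gamma\leq U_{t-}$; applying the increasing map $f$ and subtracting $Y_{t-}$ yields exactly $f(L_{t-})-Y_{t-}\leq\widetilde\gamma\leq f(U_{t-})-Y_{t-}$, which is Property 3 for $(Y_t)_{t\in\T}$.

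The main obstacle is the second-order Itô term $\half f''(X_s)\sigma^2$ appearing in $\widetilde\mu$: a priori it could spoil the drift inequalities of Property 1, and no sign assumption on $f''$ is available. The resolution is that Property 2 of the original process forces $\sigma=0$ at both boundaries, so this term vanishes precisely where Property 1 must be checked. Indeed, at $Y=f(L_t)$ we get $\widetilde\mu=f'(L_t)\,\mu(t,L_{t-};L_t,U_t)$; multiplying the original inequality $\mu(t,L_{t-};L_t,U_t)\geq\dd L_+(t)/\dd t$ by $f'(L_t)\geq0$ gives $\widetilde\mu\geq f'(L_t)\,\dd L_+(t)/\dd t=\dd (f\circ L)_+(t)/\dd t$, and symmetrically $\widetilde\mu\leq\dd(f\circ U)_+(t)/\dd t$ at $Y=f(U_t)$. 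The decreasing case is identical after relabelling the boundaries as $f(U_t)$ (lower) and $f(L_t)$ (upper): since $f'\leq0$ the multiplications reverse the inequalities, restoring the correct orientation in Properties 1 and 3. Thus $(Y_t)_{t\in\T}$ solves an SDE of the form \eqref{mainsdeonemain} meeting all three conditions, and Proposition \ref{captivepropmain} then confirms it is a captive jump-diffusion confined to $[f(L_t),f(U_t)]$ (respectively $[f(U_t),f(L_t)]$).
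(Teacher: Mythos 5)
Your proposal is correct and follows essentially the same route as the paper's proof: It\^o's formula for $Y_t=f(X_t)$, transformed boundaries $f(L_t),f(U_t)$ (relabelled when $f$ is decreasing), and verification of the three properties of Definition \ref{maindeftwo}, with the same key observation that the second-order term $\frac{1}{2}f''\sigma^2$ vanishes at the boundaries because $\sigma=0$ there. Your only refinements are cosmetic: you write the jump coefficient $\widetilde\gamma$ explicitly and derive Property 3 directly from the original bound on $\gamma$ via monotonicity, where the paper merely asserts the existence of $\hat\gamma$ and infers the bound from the a.s.\ confinement $\alpha_t\leq Y_t\leq\beta_t$.
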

\begin{proof}
First of all, since $f$ is monotonic, either $f(L_t)\leq Y_t \leq f(U_t)$ if $f$ is increasing or $f(U_t)\leq Y_t \leq f(L_t)$ if $f$ is decreasing, for all $t\in\T$, $\P$-a.s. We write $f(L_t)=\alpha_t$ and $f(U_t)=\beta_t$ for $t\in\T$. Note that $\alpha,\beta\in\G$.
Next, we derive the SDE for $(Y_t)_{t\in\T}$, and check the conditions in Definition \ref{maindeftwo} at these boundaries. 
Using It\^o's lemma, we have the following:
\begin{align}
Y_t &= Y_0 + \int_0^t \frac{\partial f}{\partial x}\d X^c_s + \frac{1}{2}\int_0^t \frac{\partial^2 f}{\partial x^2}\d \left\langle X^c_s, X^c_s\right\rangle + \sum_{0\leq s \leq t}\left[f(X_s) - f(X_{s-})\right]  \notag  \\
&= Y_0 + \int_0^t \left(\frac{\partial f}{\partial x}\mu\left(s, X_s; L_s, U_s \right) + \frac{1}{2}\frac{\partial^2 f}{\partial x^2}\sigma^2\left(s, X_s; L_s, U_s \right) \right)\d s + \int_0^t \frac{\partial f}{\partial x}\sigma\left(s, X_s; L_s, U_s \right)\d W_s \label{secondline} \\
&+ \sum_{0\leq s \leq t}\left[f(X_s) - f(X_{s-})\right]  \notag \\
&\triangleq Y_0 + \int_0^t \hat{\mu}\left(s, Y_s; \alpha_s, \beta_s \right)\d s + \int_0^t \hat{\sigma}\left(s, Y_s; \alpha_s, \beta_s \right)\d W_s + \sum_{0\leq s \leq t}\D Y_s,  \label{thirdline}
\end{align}
for all $t\in\T$. We can write $\hat{\mu}$ and $\hat{\sigma}$ in (\ref{thirdline}) in terms of $Y$, $\alpha$ and $\beta$, since $f$ is monotonic and thus has an inverse, so that we can find some $\hat{\mu}$ and $\hat{\sigma}$ that can provide (\ref{secondline}).
Also, since $f\in\mathcal{C}_{b}^2(\R)$, $\hat{\mu}$ and $\hat{\sigma}$ are locally bounded and continuous, and due to this continuity, $\D Y_t\neq 0$ if and only if $\D J_t=1$ for all $t\in\T$. Since the jump-times of $(J_t)_{t\in\T}$ are mutually independent of $(X_t)_{t\in\T}$, $f$ does not change the distribution of the jumps-times of $(Y_t)_{t\in\T}$; it only acts on the jump-sizes. Hence, there exists some function $\hat{\gamma}$ where we can write $\D Y_s = \hat{\gamma}\left(s-, Y_{s-}; \alpha_{s-}, \beta_{s-}\right)\D J_s$. Therefore, we recover (\ref{mainsdeonemain}):
\begin{align}
Y_t = Y_0 + \int_0^t \hat{\mu}\left(s, Y_s; \alpha_s, \beta_s \right)\d s + \int_0^t \hat{\sigma}\left(s, Y_s; \alpha_s, \beta_s \right)\d W_s + \sum_{0\leq s \leq t}\hat{\gamma}\left(s-, Y_{s-}; \alpha_{s-}, \beta_{s-}\right)\D J_s,  \notag
\end{align}
for all $t\in\T$. We now need to check whether $\hat{\mu}$ and $\hat{\sigma}$ satisfy property 1., and 2., in Definition \ref{maindeftwo}, respectively, and whether $\hat{\gamma}$ satisfies property 3., in Definition \ref{maindeftwo} at the boundaries.
We begin with the case where $f$ is increasing. Then $\alpha < \beta$, and $Y_t$ attains its minimum at $\alpha_t$ when $X_t = L_t$. Since $\partial f/\partial x > 0$ for any $x$, we have
\begin{align}
\hat{\mu}\left(t, \alpha_{t-}; \alpha_t, \beta_t \right) = \frac{\partial f}{\partial L}\mu\left(t, L_t; L_t, U_t \right) \geq \frac{\partial f}{\partial L} \dd L_+(t)/\dd t = \dd \alpha_+(t)/\dd t \notag
\end{align}
given that we know $(\partial^2 f/\partial L^2)\sigma^2\left(s, L_s; L_s, U_s \right) = 0$ since $\sigma\left(s, L_s; L_s, U_s \right) = 0$ by property 2., in Definition \ref{maindeftwo}.
On the other hand, $Y_t$ attains its maximum at $\beta_t$ when $X_t = U_t$, and hence,
\begin{align}
\hat{\mu}\left(t, \beta_{t-}; \alpha_t, \beta_t \right) = \frac{\partial f}{\partial U}\mu\left(t, U_t; L_t, U_t \right) \leq \frac{\partial f}{\partial U} \dd U_+(t)/\dd t = \dd \beta_+(t)/\dd t. \notag
\end{align}
Therefore, $\hat{\mu}$ satisfies property 1., in Definition \ref{maindeftwo}. As for $\hat{\sigma}$ it satisfies 2., in Definition \ref{maindeftwo}, since 
\begin{align}
\hat{\sigma}\left(s, \alpha_s; \alpha_s, \beta_s \right) = \frac{\partial f}{\partial L}\sigma\left(s, L_s; L_s, U_s \right) = 0  \hspace{0.1in} \text{and} \hspace{0.1in}
\hat{\sigma}\left(s, \beta_s; \alpha_s, \beta_s \right) = \frac{\partial f}{\partial U}\sigma\left(s, U_s; L_s, U_s \right) = 0. \notag
\end{align}
Finally, $\alpha_{t-} - Y_{t-} \leq \hat{\gamma}\left(t-, Y_{t-}; \alpha_{t-}, \beta_{t-}\right) \leq \beta_{t-} - Y_{t-}$ must hold $\P$-a.s. since we know that $\alpha_t\leq Y_t \leq \beta_t$ holds and $\D J_t\in(0,1)$ for all $t\in\T$, $\P$-a.s. The case when $f$ is decreasing follows similarly. Here, $\beta < \alpha$, and $Y_t$ attains its minimum at $\beta_t$ when $X_t = U_t$. Hence, having $\partial f/\partial x < 0$ for any $x$,
\begin{align}
\hat{\mu}\left(t, \beta_{t-}; \alpha_t, \beta_t \right) = \frac{\partial f}{\partial U}\mu\left(t, U_t; L_t, U_t \right) \geq \frac{\partial f}{\partial U} \dd U_+(t)/\dd t = \dd \beta_+(t)/\dd t \notag
\end{align}
Also, since $Y_t$ attains its maximum at $\alpha_t$ when $X_t = L_t$, we have the following:
\begin{align}
\hat{\mu}\left(t, \alpha_{t-}; \alpha_t, \beta_t \right) = \frac{\partial f}{\partial L}\mu\left(t, L_t; L_t, U_t \right) \leq \frac{\partial f}{\partial L} \dd L_+(t)/\dd t = \dd \alpha_+(t)/\dd t. \notag
\end{align}
Again, $\hat{\mu}$ satisfies property 1., in Definition \ref{maindeftwo}. We already know $\hat{\sigma}$ satisfies property 2., in Definition \ref{maindeftwo}. Finally, since $\beta_t\leq Y_t \leq \alpha_t$ for all $t\in\T$, $\P$-a.s., we must have $\beta_{t-} - Y_{t-} \leq \hat{\gamma}\left(t-, Y_{t-}; \alpha_{t-}, \beta_{t-}\right) \leq \alpha_{t-} - Y_{t-}$ for all $t\in\T$, $\P$-a.s., which is property 3., in Definition \ref{maindeftwo}.
\end{proof}
\begin{ex}
Let $Y_t = e^{X_t}$ for all $t\in\T$. Then, $(Y_t)_{t\in\T}$ is a captive jump-diffusion process, where 
\begin{align}
e^{L_t} \leq Y_t \leq e^{U_t} \hspace{0.15in} \text{for all $t\in\T$, $\P$-a.s.} \notag
\end{align}
\end{ex}
\begin{ex}
Let $L_t > 0$ and $Y_t = X_t^{-1}$ for all $t\in\T$. Then, $(Y_t)_{t\in\T}$ is a captive jump-diffusion process, where 
\begin{align}
U_t^{-1} \leq Y_t \leq L_t^{-1} \hspace{0.15in} \text{for all $t\in\T$, $\P$-a.s.} \notag 
\end{align}
\end{ex}
\subsection{Path-dependent captive jump processes}
So far, the generated jump processes have been restricted to never leave a single confined space. In Meng\"ut\"urk \& Meng\"ut\"urk (2021b) captive diffusion processes have been introduced, which are confined to stay within a finite space for some time (in their terminology a ``tunnel'') before being allowed to diffuse into another confined space in a following period of time. This behaviour makes of a captive diffusion a path-dependent process, because of the implicit monitoring that is required in order to control the shift of the diffusion from one corridor to another. In what follows, we show how captive jump processes can be constructed such that for part of the time they stay confined in a bounded space before they are allowed to {\it jump} to another restricted area for some time. This behaviour requires the modelling of controlled jumps (arrival times and size) and continuous monitoring of the process, which in turn makes the captive jump process path-dependent. As the simulation in Section \ref{SecApplications} will illustrate, the restricted areas (``tunnels'') captive jump processes may jump to are not required to be adjacent, in the sense that the process may skip neighbouring corridors. Such features render captive jump processes flexible and useful in a variety of applications.

Accordingly, we go on to develop a way to extend Definition \ref{maindeftwo} by allowing \textit{internal} boundaries to appear. In doing so, we follow \cite{52}, and introduce multiple time-segments denoted by $\T^{(j)}\subseteq\T$ for $j=0,\ldots,m\in\mathbb{N}_+$, for a fixed $m\geq 1$. Here,
$
\T^{(j)}=[\tau_{\text{start}}^{(j)},\tau_{\text{end}}^{(j)}]
$
such that $0\leq \tau_{\text{start}}^{(j)} < \tau_{\text{end}}^{(j)} \leq T$ for $j=0,\ldots,m$. 
We choose $\T^{(0)}=\T^{(m)}=\T$, that is, $\tau_{\text{start}}^{(0)}=\tau_{\text{start}}^{(m)}=0$ and $\tau_{\text{end}}^{(0)}=\tau_{\text{end}}^{(m)}=T$. Each of these time-segments govern a boundary process $(g_t^{(j)})_{t\in\T^{(j)}}$, where we have $g^{(0)}\in\widetilde{\G}^{(l)}$, $g^{(m)}\in\widetilde{\G}^{(u)}$ and $g^{(j)}\in\G$ for any $j\neq 0$ and $j \neq m$ given that $m>1$. We emphasize that $(g_t^{(0)})_{t\in\T^{(0)}}$ stands for $(L_t)_{t\in\T}$ and $(g_t^{(m)})_{t\in\T^{(m)}}$ stands for $(U_t)_{t\in\T}$ in the previous section. These are the master, or outer, boundaries.
We collect all boundaries by 
\begin{align}
\textbf{g}_t=\{g_t^{(0)}, \ldots, g_t^{(m)}\}. \notag
\end{align}
In the situation where there is a $g_t^{(j)}$, for $0<j<m$, that is not defined over some $t\in\T$, then $\textbf{g}_t$ does not include such a $g_t^{(j)}$ at the time $t\in\T$. Finally, for any $j < k$ at any $t\in\T^{(j)}\cap\T^{(k)}\neq\emptyset$, we require the ordering $g_t^{(j)} < g_t^{(k)}$, and for any pair $\{g_t^{(j)}, g_t^{(k)}\}$ where $\T^{(j)}\cap\T^{(k)}=\emptyset$, their order can be pairwise arbitrary. 
\begin{rem}
The \emph{master boundaries} $g^{(0)}$ and $g^{(m)}$ define the largest bounded domain on which captive processes may evolve, and each $g^{(j)}$ is called an \emph{internal boundary}, for $j\neq 0$, $j\neq m$ and $m>1$.
\end{rem}
Next, we introduce progressively-measurable and increasing processes $(\tau^{(j)}_t)_{t\in\T^{(j)}}$ given by
\begin{align}
\tau^{(j)}_t = \tau_{\text{start}}^{(j)} \vee \sup(s: \D X_s \neq 0\hspace{0.1in} \text{for} \hspace{0.1in} \tau_{\text{start}}^{(j)} \leq s\leq t\in\T^{(j)} ), \notag 
\end{align}
for $j=0,\ldots,m$, where we adopt the convention $\sup\emptyset = -\infty$. Hence, if there is no jump in a given time period $\T^{(j)}$, then $\tau^{(j)}_t = \tau_{\text{start}}^{(j)}$ for all $t$ in the period $\T^{(j)}$. We also introduce a monitoring process $(\Psi_t)_{t\in\T}$ that records the values of $(X_t)_{t\in\T}$ at $(\tau^{(j)}_t)_{t\in\T^{(j)}}$ for $j=0,\ldots,m$. As such, we let $(\Psi_t)_{t\in\T}$ be the non-anticipative and set-valued process 
\begin{align}
\Psi_t=((X_{\tau^{(j)}_t}, \tau^{(j)}_t) : \, \tau^{(j)}_t \leq t, \,\, j=0,\ldots,m),  \notag 
\end{align}
for all $t\in\T$. In the case we require a function to be continuous with respect to $\Psi$, the topology we mean is continuity with respect to the elements of $\Psi$. The following definition may also be viewed as a lemma to Proposition \ref{quasicaptive} below.
\begin{defn}
\label{maindeftwoextended}
A \emph{piecewise-confined captive jump process} $(X_t)_{t\in\T}\in\mathcal{C}(\R)$ is the solution to the stochastic differential equation
\begin{align}
\label{mainsdeonemainextended}
X_t = x_0 + \int_0^t \mu\left(s, \Psi_s, X_s; \textbf{g}_s\right)\dd s + \int_0^t \sigma\left(s, \Psi_s, X_s; \textbf{g}_s\right)\dd M_s + \sum_{0\leq s \leq t} \gamma\left(s-, \Psi_{s-}, X_{s-}; \textbf{g}_{s-}\right)\D J_s,
\end{align}
where $X_0 = x_0 \in [g^{(0)}_0,g^{(m)}_0)$. Here, $\mu$ is a locally bounded map that is continuous (possibly except at $\D X\neq 0$), $\sigma$ is a locally bounded continuous map, and $\gamma$ is a locally bounded \cadlag map, satisfying
\begin{enumerate}
\item $\mu\left(t,\Psi_{t}, g^{(j)}_{t-}; \textbf{g}_t\right)\geq \dd g^{(j)}_+(t)/\dd t + \D g^{(j)}_t$, if $X_{\tau^{(j)}_t} \geq g^{(j)}_{\tau^{(j)}_t}$ for all $t\in\T^{(j)}$;
\item $\mu\left(t,\Psi_{t}, g^{(j)}_{t-}; \textbf{g}_t\right)\leq \dd g^{(j)}_+(t)/\dd t + \D g^{(j)}_t$, if $X_{\tau^{(j)}_t} < g^{(j)}_{\tau^{(j)}_t}$ for all $t\in\T^{(j)}$;
\item $\sigma\left(t,\Psi_{t},g^{(j)}_{t-}; \textbf{g}_t\right) = 0$ for all $t\in\T^{(j)}$;
\item $g^{(0)}_{t-} - X_{t-} \leq \gamma\left(t-, \Psi_{t-}, X_{t-}; \textbf{g}_{t-}\right) \leq g^{(m)}_{t-} - X_{t-}$ for all $t\in\T$,
\end{enumerate}
for $j=0,\ldots,m$ $\P$-a.s., given that $(M_t)_{t\in\T}\in\mathcal{M}(\R)$ and $(J_t)_{t\in\T}\in\mathcal{J}(\R)$ are mutually independent.
\end{defn}
There are a few observations worth making at this stage. First, we notice that $\D g^{(j)}_t = 0$ for any $j\neq 0$ and $j\neq m$, since they belong to $\mathcal{G}$---i.e., internal boundaries cannot have discontinuities. This however is no real restriction, since multiple internal confining functions $(g^{(j)}_t)_{t\in\T}$ can be used in sequence to construct a piecewise process that behaves like an internal corridor with jumps. The reason for requiring each individual $g^{(j)}\in\mathcal{G}$ for any $j\neq 0$ and $j\neq m$ is a technical requirement that is needed for Proposition \ref{quasicaptive} below to ensure captivity in \emph{each} corridor segment that would otherwise be broken over time periods where $(X_t)_{t\in\T}$ is continuous. Second, the SDE coefficients are path-dependent, that is, they monitor past values of the captive jump process at $(\tau^{(j)}_t)_{t\in\T^{(j)}}$. This means that such processes are non-Markovian even if $(M_t)_{t\in\T}$ and $(J_t)_{t\in\T}$ are Markov processes.
Third, the initial condition $x_0 \in [g^{(0)}_0,g^{(m)}_0)$ does not include $g^{(m)}_0$, since we associated the case $X(\tau_{\text{start}}^{(j)}) = g^{(j)}(\tau_{\text{start}}^{(j)})$ with Property 1., above, which could have been associated with Property 2. having initial condition $x_0 \in (g^{(0)}_0,g^{(m)}_0]$. 
Fourth, Property 4. only requires the jump coefficient $\gamma$ to be bounded with respect to the \emph{master} boundaries $\{g^{(0)}_t,g^{(m)}_t\}$. Hence, Definition \ref{maindeftwoextended} allows for jump-sizes to exceed \emph{internal} corridors---this will be useful for applications of captive jump processes. 
\begin{prop}
\label{quasicaptive}
The following statements hold $\P$-almost surely.
\begin{enumerate}
\item For any $t\in\T$, $g_t^{(0)} \leq X_t \leq g_t^{(m)}$. 
\item For any $j\neq 0$ and $j\neq m$, if $X_{\tau_{\text{start}}^{(j)}} \geq g^{(j)}_{\tau_{\text{start}}^{(j)}}$ then $X_{t} \geq g^{(j)}_{t}$ if $\D J_t=0$ for all $t\in\T^{(j)}$. If $\D J_t=1$ for some $t\in\T^{(j)}$ then $\P(X_{t} < g^{(j)}_{t})  \geq 0$.
\item For any $j\neq 0$ and $j\neq m$, if $X_{\tau_{\text{start}}^{(j)}} < g^{(j)}_{\tau_{\text{start}}^{(j)}}$ then $X_{t} \leq g^{(j)}_{t}$if $\D J_t=0$ for all $t\in\T^{(j)}/(\tau_{\text{start}}^{(j)})$. If $\D J_t=1$ for some $t\in\T^{(j)}$ then $\P(X_{t} > g^{(j)}_{t})  \geq 0$.
\end{enumerate}
\end{prop}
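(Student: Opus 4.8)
The plan is to reduce every assertion to the one-domain non-escape argument already recorded in Proposition~\ref{captivepropmain} (and in \cite{50}), applied separately at the master boundaries and at each internal boundary, letting the monitoring process $(\Psi_t)_{t\in\T}$ decide which of the two drift conditions---Property~1 or Property~2 of Definition~\ref{maindeftwoextended}---is in force. The genuinely new ingredient relative to the Markovian case is the path-dependence carried by $\tau^{(j)}_t$ and $X_{\tau^{(j)}_t}$; most of the work is the bookkeeping that keeps track of which side of a given boundary the process is currently confined to.

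Statement~1 is essentially Proposition~\ref{captivepropmain} with $g^{(0)}$ and $g^{(m)}$ in the roles of the lower and upper boundaries. Since $g^{(0)}$ is the lowest and $g^{(m)}$ the highest boundary, the monitored value obeys $X_{\tau^{(0)}_t}\geq g^{(0)}_{\tau^{(0)}_t}$ and, for $x_0<g^{(m)}_0$, $X_{\tau^{(m)}_t}<g^{(m)}_{\tau^{(m)}_t}$ up to any boundary-contact time, so Property~1 (with $j=0$) is active at $g^{(0)}$ and Property~2 (with $j=m$) at $g^{(m)}$. I would formalize this through a first-exit time $\tau=\inf\{t:X_t<g^{(0)}_t\text{ or }X_t>g^{(m)}_t\}$ and show $\tau\notin\T$ a.s.: on $\{X_{t-}=g^{(0)}_{t-}\}$ the active drift dominates $\dd g^{(0)}_+(t)/\dd t+\D g^{(0)}_t$ while $\sigma=0$ by Property~3, so the continuous part cannot cross downward, symmetrically at $g^{(m)}$, and Property~4 forces $X_{t-}+\gamma\,\D J_t\in[g^{(0)}_{t-},g^{(m)}_{t-}]$ at every jump. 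Finite activity of $(J_t)_{t\in\T}$ lets me interlace the continuous excursions with the finitely many jumps and conclude that no escape ever occurs.

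For Statements~2 and~3 I would fix an internal boundary $g^{(j)}\in\G$, so that $\D g^{(j)}_t=0$, and work on the event that $X$ starts on the prescribed side. As long as no jump has occurred in $\T^{(j)}$, the convention $\sup\emptyset=-\infty$ freezes $\tau^{(j)}_t=\tau_{\text{start}}^{(j)}$, hence the monitored value stays equal to $X_{\tau_{\text{start}}^{(j)}}$; if this lies at or above $g^{(j)}$ then Property~1 is active and $g^{(j)}$ behaves as a floor, while if it lies strictly below then Property~2 is active and $g^{(j)}$ behaves as a ceiling. The same continuous non-escape argument as in Statement~1---now with $\mu\geq\dd g^{(j)}_+(t)/\dd t$ (resp.\ $\leq$) and $\sigma=0$ at $g^{(j)}$---then yields $X_t\geq g^{(j)}_t$ (resp.\ $X_t\leq g^{(j)}_t$) throughout the no-jump regime. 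Once a jump occurs, Property~4 bounds $\gamma$ only by the master boundaries, so the jump size is permitted to exceed the internal corridor and nothing forbids $X_t<g^{(j)}_t$ (resp.\ $X_t>g^{(j)}_t$); this is precisely the assertion that the crossing probability can be positive.

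The main obstacle is making the monitoring argument airtight. I must verify that between two consecutive jumps the pair $(\tau^{(j)}_t,X_{\tau^{(j)}_t})$ is genuinely constant and lies on the asserted side of $g^{(j)}$, so that a single fixed drift inequality---Property~1 or Property~2, not an alternation between them---governs each entire continuous excursion and the one-domain reflection argument transfers verbatim. The delicate points are exactly the monitoring and jump times, where $\tau^{(j)}_t$ may update and switch the active condition, and the boundary-contact configurations where $X$ lands exactly on a boundary; I would handle these by decomposing $\T^{(j)}$ along the finitely many jump times of $(J_t)_{t\in\T}$, running the continuous argument on each open subinterval, and patching across jumps using right-continuity together with the reflection-or-absorption behaviour guaranteed by Properties~1--3.
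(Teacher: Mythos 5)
Your proposal is correct and takes essentially the same route as the paper, whose proof is a one-line citation: captivity at the master boundaries via Proposition \ref{captivepropmain}, one-sided confinement at each internal boundary via Proposition 2.8 of \cite{52} (your monitoring-freeze argument, with $\tau^{(j)}_t=\tau_{\text{start}}^{(j)}$ while no jump has occurred, is precisely what that citation encapsulates), and Property 4 of Definition \ref{maindeftwoextended} to permit jumps across internal boundaries while forbidding escape through the master ones. Your decomposition along the finitely many jump times and your handling of boundary contact via reflection-or-absorption mirror the paper's own treatment in Proposition \ref{captivepropmain}, so nothing further is needed.
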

\begin{proof}
This follows from our construction of Definition \ref{maindeftwoextended} together with Proposition \ref{captivepropmain}, and Proposition 2.8 in \cite{52}, combined with Property 4., in Definition \ref{maindeftwoextended}, that allows $\D X_t$ for $t\in\T$ to be large enough for $(X_t)_{t\in\T}$ to exceed the boundary $g^{(j)}$ for $j\neq 0$ and $j\neq m$, but not from the master boundaries $\{g^{(0)},g^{(m)}\}$, $\P$-a.s.
\end{proof}
Proposition \ref{quasicaptive} also tells us that $\gamma$ can be chosen in such a way that if $(X_t)_{t\in\T}$ is in an internal corridor, then it may remain in that corridor even if there is a jump---Property 4. of Definition \ref{maindeftwoextended} allows for this feature. 
However, we are more interested in modelling systems where a transition between two internal corridors is allowed if there is a jump, and \emph{only} if there is a jump, as Definition \ref{maindeftwoextended} offers.

\section{Applications}\label{SecApplications}
We shall work with mean-reverting captive jump processes, see Example \ref{meanrevertexamplejump}, for demonstration. We refer the reader to \cite{50} for other families of captive processes (with no jumps), which can be extended by use of Definition \ref{maindeftwo}.

\subsection{Confinement within circular domains}
We first consider a setup without internal corridors. Let $(X^{(1)}_t)_{t\in\T}$ be a captive jump-diffusion, where $X^{(0)}_0\in[a,d]$ for some $0 \leq a < d  < \infty$, and let $(X^{(2)}_t)_{t\in\T}$ be another captive jump-diffusion where $X^{(0)}_0\in[0,2\pi]$. These two processes are governed by
\begin{align}
\d X^{(i)}_t = (\beta^{(i)} - X^{(i)}_t ) \dd t + (X^{(i)}_t - L^{(i)})\left(U^{(i)} - X^{(i)}_t\right)\dd W^{(i)}_t +\theta^{(i)}_{t-} \min\left(X^{(i)}_{t-} - L^{(i)}, U^{(i)} - X^{(i)}_{t-}\right)\D J^{(i)}_t.  \notag
\end{align}
Here, $(W^{(1)}_t)_{t\in\T}$ and $(W^{(2)}_t)_{t\in\T}$ may be correlated. Similarly, $(J^{(1)}_t)_{t\in\T}$ and $(J^{(2)}_t)_{t\in\T}$ may also be correlated. The process $(\theta_t)_{t\in\T}$ may be any \cadlag map as long as $\theta_t\in[-1,1]/\{0\}$ for all $t\in\T$. 

For the examples below, we simulate in advance a random path for $(\theta_t)_{t\in\T}$ by uniformly sampling its values at every step on a discrete time grid. The boundaries are given by
\begin{align}
&a = L^{(1)} < \beta^{(1)} < U^{(1)} = d, \notag \\
&0 = L^{(2)} < \beta^{(2)} < U^{(2)} = 2\pi.  \notag
\end{align}
Next we construct the two-dimesional process $(P_t)_{t\in\T}$ given by
$(X^{(1)}_t,X^{(2)}_t)_{t\in\T}$
on a polar coordinate system, where $(X^{(1)}_t)_{t\in\T}$ models the distance from the origin and $(X^{(2)}_t)_{t\in\T}$ is the radian process. If $r\geq d$ is the radius of a circle, we can choose $a$ and $d$ such that $(P_t)_{t\in\T}$ evolves either inside the outer circular domain or within a circular corridor inside the domain. First we show the case where an outer circle serves as the outer boundary for the confined circular domain.
\begin{figure}[h!] 
\begin{center}
{\includegraphics[scale=.225]{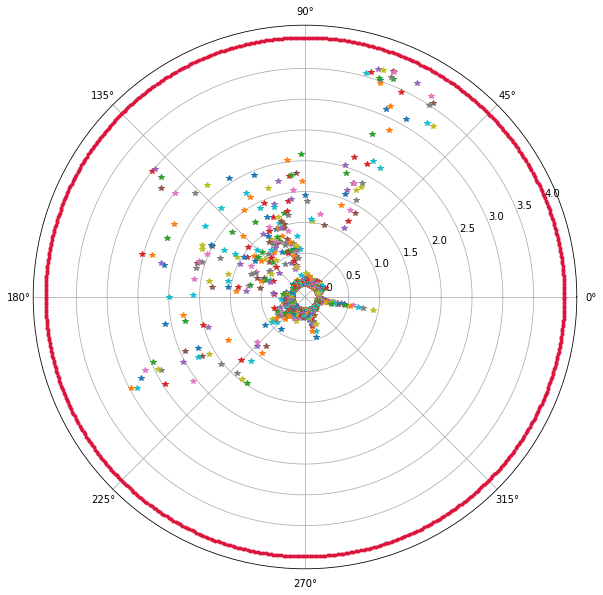}} 
\hspace{1.5cm}
{\includegraphics[scale=.225]{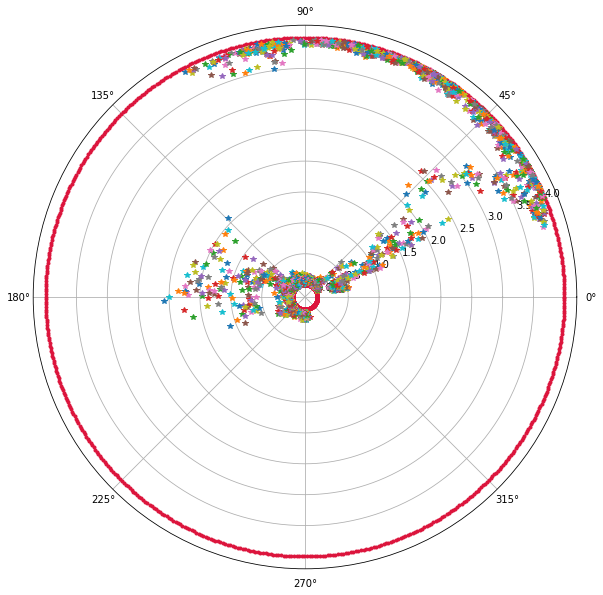}} 
\caption[Captiveone]{Here, $a=0$, $d=r=4$ and $\beta = \frac{1}{2}(a + d)$}
\end{center}
\end{figure}
We can see two different \emph{accumulation} behaviours in Figure 2. On the left-hand side, we see that the process tends to evolve towards the origin of the circle, while on the right-hand side, the process visits the border of the circle many times during the simulation period. 
\begin{rem}
\label{spherecaptive}
One can introduce a third captive jump-diffusion process $(X^{(3)}_t)_{t\in\T}$ as the second orthogonal radian coordinate, and project $(X^{(1)}_t,X^{(2)}_t,X^{(3)}_t)_{t\in\T}$ inside a sphere. This indicates how the construction of captive jump processes can be extended to processes taking values in confined domains in $\R^n$, $n\in\N$.
\end{rem}
In Figure 3, we shrink the domain inside the circle to keep the paths within shrinking rings, each shaped as a toroid. Again, this can be extended to three dimensions in the spirit of Remark \ref{spherecaptive}.
\begin{figure}[h!] 
\begin{center}
{\includegraphics[scale=.225]{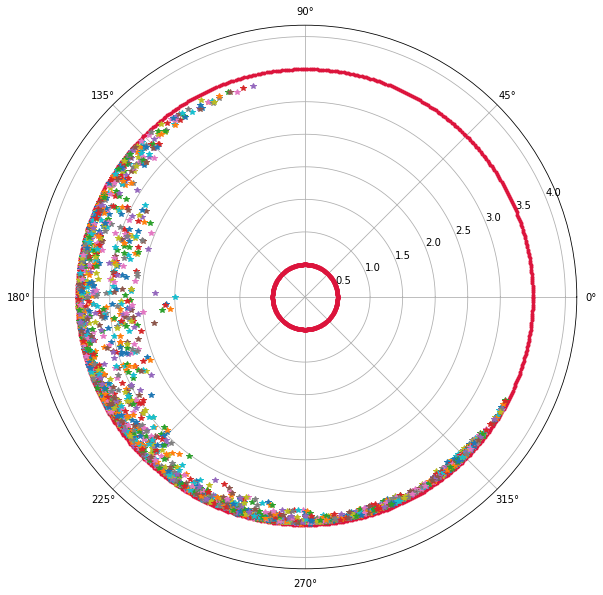}} 
\hspace{1.5cm}
{\includegraphics[scale=.225]{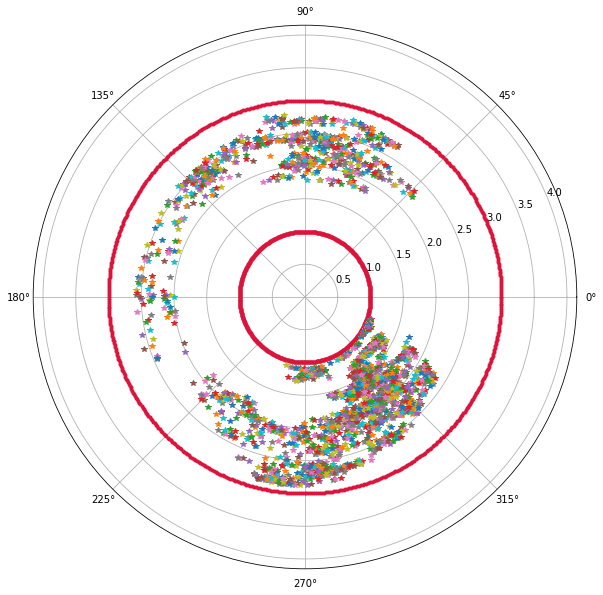}} 
\\\vspace{.5cm}
{\includegraphics[scale=.225]{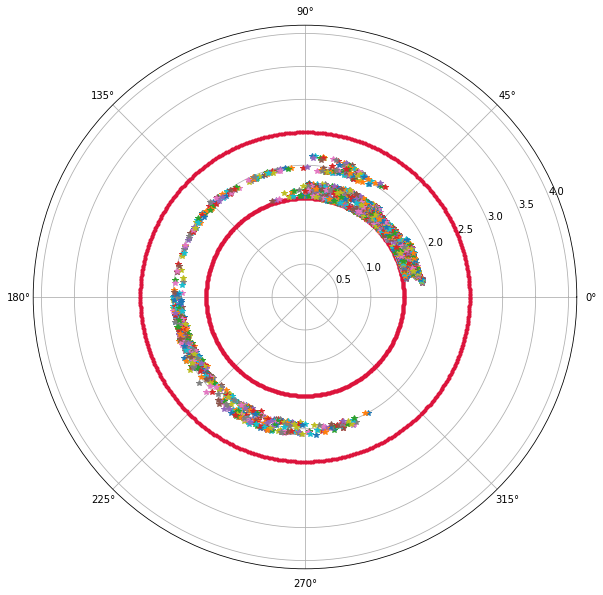}} 
\hspace{1.5cm}
{\includegraphics[scale=.225]{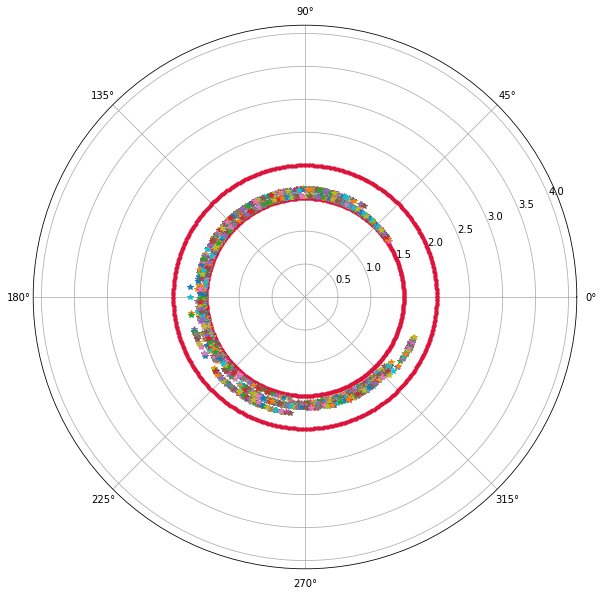}} 
\caption[Captiveone]{Confining boundaries \{a,d\}: top-left (0,3.5), top-right (1,3), bottom-left: (1.5,2.5), bottom-right: (1.5,2).}
\end{center}
\end{figure}
\\
This setup can be used to model systems with a central gravitational force which keeps stochastic particles within circular corridors. Here, even if $(X^{(1)}_t,X^{(2)}_t)_{t\in\T}$ jumps, it can never cross the master (outer) boundaries and so break free from the confined toroidal space. In the next section, we shall add internal corridors, which the particles are allowed to trespass only if they jump far enough.

\subsection{Captive jumps across circular domains}
Now we consider the situation where, in addition to the outer boundaries, there are two inner boundaries for the distance-process $(X^{(1)}_t)_{t\in\T}$; we keep the radian-process $(X^{(2)}_t)_{t\in\T}$ as in the previous section. We set $\T^{(0)}=\T^{(1)}=\T^{(2)}=\T^{(3)}=\T$ such that $(g^{(0)}_t)_{t\in\T}=a$, $(g^{(1)}_t)_{t\in\T}=b$, $(g^{(2)}_t)_{t\in\T}=c$ and $(g^{(3)}_t)_{t\in\T}=d$. Hence, $\tau_{\text{start}}^{(j)}=0$ and $\tau_{\text{end}}^{(j)}=T$ for $j=1,\ldots,3$. So, we simply write $\tau^{(j)}_t = \tau_t$ for $j=1,\ldots,3$ and $t\in\T$.
We initialize $(X^{(1)}_t)_{t\in\T}$ such that it starts within the inner circular corridor, where $X^{(0)}_0\in[a,b]$ for some $0 \leq a < b < c < d < \infty$, where $[a,b]$ forms the lowermost corridor, $[b,c]$ forms a mid-corridor and $[c,d]$ forms the uppermost corridor for $(X^{(1)}_t)_{t\in\T}$, which is now governed by
\begin{align}
\d X^{(1)}_t = (\beta^{(1)}_t(\Psi^{(1)}_t) - X^{(1)}_t ) \dd t + \left(\prod_{g^{(j)}\in\boldsymbol{g}}(X^{(1)}_t - g^{(j)})\right)\dd W^{(1)}_t +\theta^{(1)}_{t-} \min\left(X^{(1)}_{t-} - a, d - X^{(1)}_{t-}\right)\D J^{(1)}_t.  \notag
\end{align}
Here, $\boldsymbol{g}=(a,b,c,d)$ and $(\beta^{(1)}_t(\Psi^{(1)}_t))_{t\in\T}$ is given by
\begin{equation*}
\label{betalogic}
\beta_t(\Psi_{t}) =
\begin{cases}
w_1*a+ (1-w_1)*b & \text{if $X_{\tau_t} \in[a,b)$},\\
w_2*b+ (1-w_2)*c & \text{if $X_{\tau_t} \in[b,c)$},\\
w_3*c+ (1-w_3)*d & \text{if $X_{\tau_t} \in[c,d]$},\\
\end{cases}
\end{equation*}
for some $w_1,w_2,w_3\in(0,1)$. Next, we construct a process $(P_t)_{t\in\T}$ by introducing the two-dimesional process $(X^{(1)}_t,X^{(2)}_t)_{t\in\T}$ taking values in the polar coordinate system. The plots in Figure 4 show samples of $(X^{(1)}_t)_{t\in\T}$ and the associated process $(X^{(1)}_t,X^{(2)}_t)_{t\in\T}$.
\begin{figure}[h!] 
\begin{center}
\begin{align*}
&{\includegraphics[scale=.25]{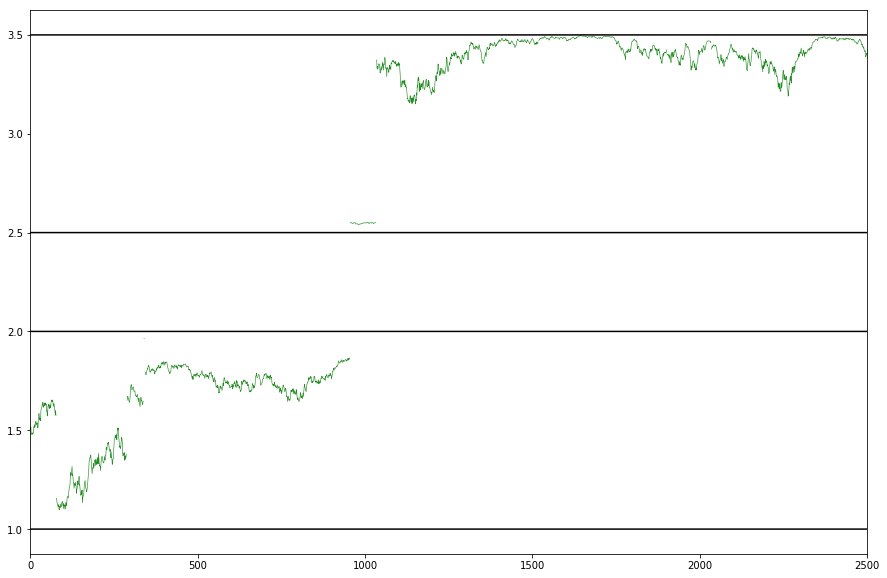}}&
&{\includegraphics[scale=.25]{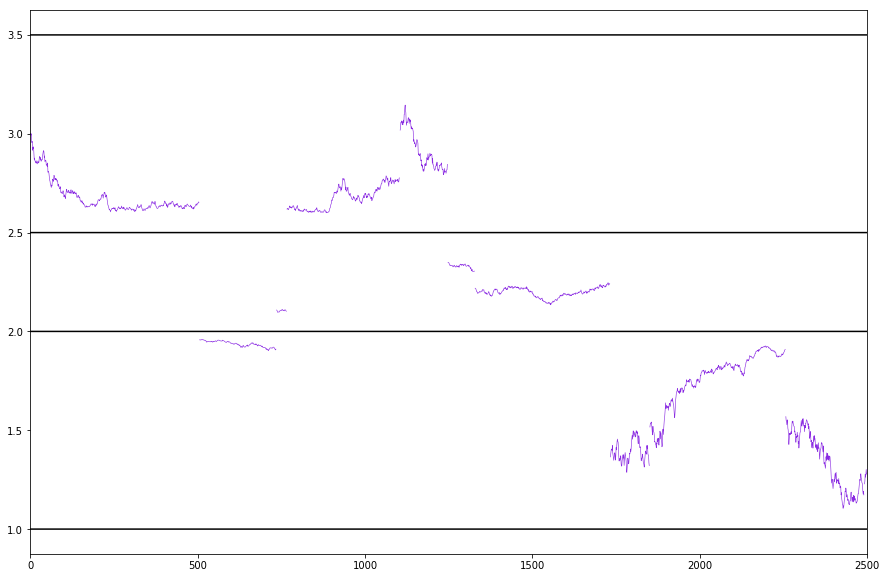}}& 
\end{align*}
\\
\begin{align*}
&{\includegraphics[scale=0.25]{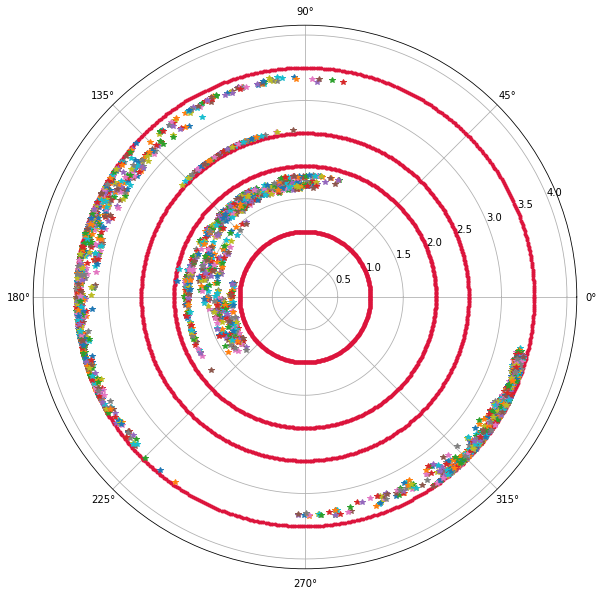}}&
&{\includegraphics[scale=.25]{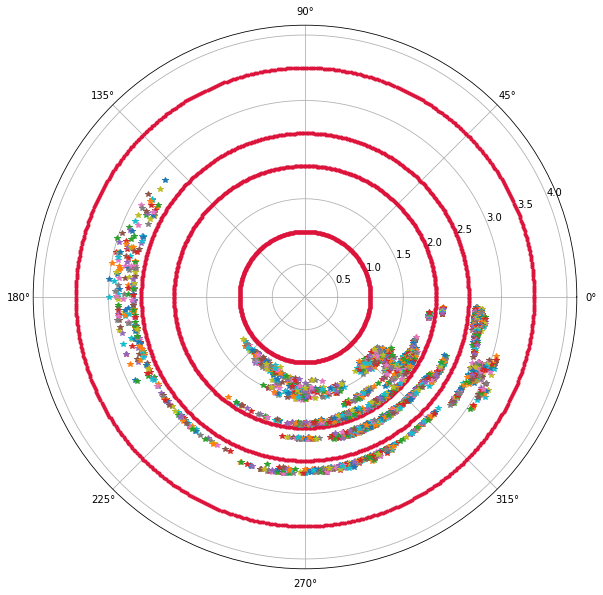}}& 
\end{align*}
\end{center}
\caption[Captiveone]{Here, $a=1$, $b=2$, $c=2.5$, $d=3.5$. Also, $w_1=w_2=w_3=0.5$}
\end{figure}
The simulation on the left-hand side shows how $(X^{(1)}_t,X^{(2)}_t)_{t\in\T}$ in polar coordinates remains within the lower or upper corridors, where the transition from $[a,b)$ to $[c,d]$ occurs when $(X^{(1)}_t)_{t\in\T}$ jumps far enough to skip the mid-corridor $[b,c)$. On the right-hand side, $(X^{(1)}_t,X^{(2)}_t)_{t\in\T}$ visits every corridor, depending on the size of the jumps. Of course, the process might jump within a specific corridor without necessarily leaving it. 

The conditional state probabilities can be calculated, where in our context, \emph{state} means a corridor. That is, it is possible to calculate the probability of the captive jump process to move from one corridor to another. We express the conditional state probability by 
\begin{align}
\P_t([k,l),[a,b)):= \P\left(X^{(1)}_{t}\in[k,l) \,|\, \F_{t-}, X^{(1)}_{t-}\in[a,b) \right), \notag
\end{align}
for $[k,l)\in([b,c), [c,d))$.
We also define the set
\begin{align}
\mathcal{S}^{[k,l)}:=\left[ \frac{(k - X^{(1)}_{t-})}{\min\left(X^{(1)}_{t-} - a, d - X^{(1)}_{t-}\right)} \,,\, \frac{(l - X^{(1)}_{t-})}{\min\left(X^{(1)}_{t-} - a, d - X^{(1)}_{t-}\right)}   \right). \notag
\end{align}
Finally, we ask for $(\theta^{(1)}_t)_{t\in\T}$ and $(J^{(1)}_t)_{t\in\T}$ to be mutually exclusive. 
Then, we have the following:
\begin{align}
\P_t([k,l),[a,b)) & = \P\left(\left.\theta_{t}\in \mathcal{S}^{[k,l)} \,\right|\, \F_{t-}, X^{(1)}_{t-}\in[a,b) \right)\P\left(\D J_{t} =1 \,|\, \F_{t-}, X^{(1)}_{t-}\in[a,b) \right),  \notag
\end{align}
for $[k,l)\in([b,c), [c,d))$. Note that since in this model $\theta_t\in[-1,1]/(0)$ for all $t\in\T$, we have
\begin{align}
\P_t([k,l),[a,b)) = 0 \hspace{0.15in} \text{if} \hspace{0.15in} (k - X^{(1)}_{t-}) > \min\left(X^{(1)}_{t-} - a, d - X^{(1)}_{t-}\right). \notag
\end{align}
This shows that $(X^{(1)}_t)_{t\in\T}$ has a higher probability of moving to another corridor if it is closer to a boundary of that corridor. These probabilities can be calculated from any one corridor to another, i.e., we also have
\begin{align}
\P_t([k,l),[c,d)) & = \P\left(\left.\theta_{t}\in \mathcal{S}^{[k,l)} \,\right|\, \F_{t-}, X^{(1)}_{t-}\in[c,d) \right)\P\left(\D J_{t} =1 \,|\, \F_{t-}, X^{(1)}_{t-}\in[c,d) \right) \notag
\end{align}
for $[k,l)\in([a,b), [b,c))$. Again, since $\theta_t\in[-1,1]/(0)$ for all $t\in\T$, we have
\begin{align}
\P_t([k,l),[c,d)) = 0 \hspace{0.15in} \text{if} \hspace{0.15in} (l - X^{(1)}_{t-}) < - \min\left(X^{(1)}_{t-} - a, d - X^{(1)}_{t-}\right). \notag
\end{align}
Finally, the probability of the captive jump process moving from the mid-corridor to either the inner or outer corridor is given by
\begin{align}
\P_t([k,l),[b,c)) & = \P\left(\left.\theta_{t}\in \mathcal{S}^{[k,l)} \,\right|\, \F_{t-}, X^{(1)}_{t-}\in[b,c) \right)\P\left(\D J_{t} =1 \,|\, \F_{t-}, X^{(1)}_{t-}\in[b,c) \right), \notag
\end{align}
for $[k,l)\in([a,b), [c,d))$. Hence, the set $\mathcal{S}^{[k,l)}$ serves for all possible changes of state in this model. All expressions can be further simplified if $(\theta_t)_{t\in\T}$ and $(J_t)_{t\in\T}$ are mutually independent such that
\begin{align}
\P_t([k,l),[x_1,x_2)) & = \P\left(\left.\theta_{t}\in \mathcal{S}^{[k,l)} \,\right|\, X^{(1)}_{t-}\in[x_1,x_2) \right)\P\left(\D J_{t} =1 \right). \notag
\end{align}
This setup can be used to model a system in which a stochastic particle may jump from one energy state to another with a large-enough jump that is induced by a sufficiently energetic exogenous shock. 
Here one might make a connection to a quantum mechanical system, whereby one is interested in modelling the transition of the stochastic wave function of a quantum particle from one energy state to another. Another application that could be considered is the modelling of quantum tunnelling in a stochastic setting. Here the potential walls (barriers) of the quantum system that the particle ``overcomes'' could be modelled by the boundaries of an internal corridor, which are overcome by a large-enough jump. In this context, while the quantum particle is modelled by a captive jump process able to overcome walls, a classical particle would be modelled by a captive diffusion process trapped within a (possibly time-dependent) corridor, see \cite{52}. The concept of domain boundaries (or walls), which produce clusters unable to disperse (if unaided by external intervention), abounds in many fields of physics, but it is also encountered in finance (e.g., volatility clustering, herding in markets), chemistry, sociology and psychology to just name a few.

\end{document}